\numberwithin{equation}{section}
\newtheorem{theorem}{Theorem}[section]
\newtheorem{lemma}[theorem]{Lemma}
\theoremstyle{definition}
\newcommand{\Rn}{\mathbb{R}^n}
\newcommand{\supp}{{\rm supp}\,}
\def\XXint#1#2#3{{\setbox0=\hbox{$#1{#2#3}{\int}$}
\vcenter{\hbox{$#2#3$}}\kern-.5\wd0}}
\let \la=\lambda
\begin{document}

\title[On General multilinear square function with non-smooth kernels]
  {On General multilinear square function with non-smooth kernels}

\authors

\author[M. Hormozi]{Mahdi Hormozi}
\address{Mahdi Hormozi \\
Department of Mathematical Sciences, Division of Mathematics,
University of Gothenburg, Gothenburg 41296, Sweden}
\email{hormozi@chalmers.se}

\author{Zengyan Si}
\address{
        Zengyan Si\\
       School of Mathematics and Information Science\\
       Henan Polytechnic University\\ Jiaozuo 454000\\People's
Republic of China} \email{zengyan@hpu.edu.cn}
\thanks{The second author was supported by the National Natural Science Foundation of China(No.11401175) and Doctor Foundation of Henan Polytechnic University(No.B2012-055).The third author was supported partly by NSFC
(No.11471041), the Fundamental Research Funds for the Central Universities (No.2012CXQT09 and No.2014kJJCA10) and NCET-13-0065.  }

\author{Qingying Xue}
\address{Qingying Xue
\\
School of Mathematical Sciences
\\
Beijing Normal University
\\
Laboratory of Mathematics and Complex Systems
\\
Ministry of Education
\\
Beijing 100875
\\
People's Republic of China } \email{qyxue@bnu.edu.cn}

\subjclass[2010]{Primary: 42B20, 42B25.}
\keywords{}

\arraycolsep=1pt

%
%
\begin{abstract}
In this paper, we obtain some boundedness of multilinear square functions $T$ with non-smooth kernels, which extend some known results significantly. The corresponding multilinear maximal square function $T^*$ was also introduced and weighted strong and weak
type estimates for $T^*$ were given.
\end{abstract}

\maketitle


\section{Introduction}
\label{Sect:Introduction}

\par


%

It is well-known that the multilinear Calder$\acute{o}$n-Zygmund operators were introduced
and first studied by Coifman and Meyer \cite{coif1, coif2, coif3}, and later by Grafakos and Torres \cite{Gra1, Gra2}. The study
of this subject was recently enjoyed a resurgence of renewed
interest and activity. In particular, the study of multilinear
singular integral operators with non-standard kernels have recently
received pretty much attention. 

Before we state some known results, we begin by giving some definitions and notations.
For any $v\in(0,\infty),$ a locally integrable function $K_v(x,y_1,\dots, y_m)$  defined away from the diagonal
 $x=y_1=\cdots =y_m$ in $(\mathbb{R}^n)^{m+1}$ satisfies the integral condition of $C-Z$ type, if there are some positive constants $\gamma, A,$  and $B>1,$  such that
 \begin{equation}\label{size1}
\begin{split}
\left( \int_{0}^\infty |K_v(x,y_1,\cdots,y_m)|^2 \frac{dv}{v}\right)^{\frac 12}\leq \frac{A}{(\sum_{j=1}^m|x-y_j|)^{mn}},
\end{split}
\end{equation}

\begin{equation}\label{eq2}
 \begin{split}
  &\left( \int_{0}^\infty |K_v(z,y_1,\cdots,y_m)-K_v(x,y_1,\cdots,y_m)|^2 \frac{dv}{v}\right)^{\frac 12}
  \leq  \frac{A|z-x|^\gamma}{(\sum_{j=1}^m|x-y_j|)^{mn+\gamma}},
\end{split}
\end{equation}
whenever $|z-x|\leq \frac{1}{B} \max_{j=1}^m{|x-y_j|}$; and
\begin{equation}\label{eq3}
\begin{split}
 &\left( \int_{0}^\infty |K_v(x,y_1,\dots,y_i,\dots,y_m)-K_v(x,y_1,\dots,y'_i,\dots,y_m)|^2 \frac{dv}{v}\right)^{\frac 12}
  \leq \frac{A |y_j-y_{j}'|^\gamma}{(\sum_{j=1}^m|x-y_j|)^{mn+\gamma}}
\end{split}
\end{equation}
whenever $|y-y'|\leq \frac{1}{B} \max_{j=1}^m{|x-y_j|}$.
\par
Xue and Yan \cite{XY} introduced a multilinear square function $T$ which is defined by
 \begin{equation}
\begin{split}
T(\vec{f})(x)=\left( \int_{0}^\infty \left|\int_{(\mathbb{R}^n)^m}K_v(x,y_1,\dots,y_m) \prod_{j=1}^mf_{j}(y_j)dy_1,\dots,dy_m\right|^2\frac{dv}{v}\right)^{\frac 12},
\end{split}
\end{equation}
for any $\vec{f}=(f_1,\dots,f_m)\in  \mathcal{S}(\mathbb{R}^n)\times \cdots \times \mathcal{S}(\mathbb{R}^n)$ and all $x\notin \bigcap_{j=1}^m \texttt{supp} f_j.$  We assume that for some $1\leq q_1,\cdots,q_m
< \infty$ and $0<q<\infty,$ $T$  can be extended to a bounded multilinear operator from
$L^{q_1}\times \cdots \times L^{q_m}$ to $L^q$, where
$\frac{1}{q}=\frac{1}{q_1}+\cdots +\frac{1}{q_m}.$
\par Xue and Yan \cite{XY} obtain the following results.
\par
\textbf{Theorem A}  (\cite{XY})
  Let $T$ be a multilinear square function with the kernel satisfying the integral condition of $C-Z$ type.
  Then $T$ can be extended to a bounded operator from $L^1(\Rn)\times\ldots\times L^1(\Rn)$ to $L^{1/m,\infty}(\Rn)$.

\par
\textbf{Theorem B}  (\cite{XY})
 Let $T$ be a multilinear square function with the kernel satisfying the integral condition of $C-Z$ type.  Let $\frac{1}{p}=\frac{1}{p_1}+\ldots+\frac{1}{p_m}$ with $1\leq p_1,\ldots,p_{m}< \infty$ and assume that $\vec{\omega}$ satisfies $A_{\vec{p}}$ condition, then the following results hold:

 \begin{enumerate}
   \item If there is no $p_i=1$, then $\|T(\vec{f}) \|_{L^{p}(\nu_{\vec{w}})}\leq C \prod_{i=1}^{m} \|f_i\|_{L^{p_i}(w_i)} $\\
   \item If there is a $p_i=1$, then $\|T(\vec{f}) \|_{L^{p,\infty}(\nu_{\vec{w}})}\leq C \prod_{i=1}^{m} \|f_i\|_{L^{p_i}(w_i)} $
 \end{enumerate}

Recently, many mathematicians are concerned to remove or replace the smoothness
condition on the kernel  \cite{anh, duong, ML, L, duonggong, MN, LZ, YXY}.
It is natural to ask under the non-smooth condition, does Theorem A and Theorem B still hold or not ? In this paper we can give a positive answer: we can extend Theorem A and Theorem B to non-smooth case. Moreover, we  introduce the multilinear maximal square function $T^*$ and  weighted  strong and weak
type estimates  for $T^*$ are  also given.
\par To begin with, we first recall  a class of integral operators
$\{A_t\}_{t>0}$, that plays the role of an approximation
to the identity modifying the original definition in \cite{DM}
to extend it to a more general scenario. We assume that the operators $A_{t}$ are associated with kernels $a_{t}(x,y)$ in the sense that\\
$$A_{t}f(x)=\int_{\mathbb{R}^{n}}a_{t}(x,y)f(y)dy$$ for every function $f\in L^{p}(\mathbb{R}^{n})$, $1\leq p\leq \infty$, and the kernels $a_{t}(x,y)$ satisfy the following size conditions\\
\begin{equation}\label{sizecondition1}
|a_{t}(x,y)|\leq h_{t}(x,y):=t^{-n/s}h\left(\frac{|x-y|}{t^{1/s}}\right),
\end{equation}
where $s$ is a positive fixed constant and $h$ is a positive, bounded, decreasing function satisfying\\
\begin{equation}\label{smallcondition2}
\lim_{r\rightarrow0}r^{n+\eta}h(r^{s})=0
\end{equation}
for some $\eta>0$. These conditions imply that for some $C>0$ and all $0<\eta\leq\eta^{\prime}$, the kernels $a_{t}(x,y)$ satisfy $$|a_{t}(x,y)|\leq C t^{-n/s}(1+t^{-1/s}|x-y|)^{-n-\eta^{\prime}}.$$

\textbf{Assumption(H1)} Assume that for each $i=1,\cdots,m$ there exist operators $\{A_t^{(i)}\}_{t>0}$ with
kernels $a^{i}_t(x,y)$ that satisfy condition \eqref{sizecondition1} and \eqref{smallcondition2} with constants $s$ and $\eta$
and that for every $j=0,1,2,\cdots,m, $ there
exist kernels $K_{t,v}^{(i)}$ such that

\begin{equation}\label{H1}
\begin{split}
&<T(f_1,\cdots, A_t^{(i)}f_i,\cdots,f_m),g>\\
&=\int_{\mathbb{R}^n}\left(\int_0^\infty\left|\int_{(\mathbb{R}^n)^m}K_{t,v}^{(i)}(x,y_1,\cdots,y_m)f_1(y_1)\cdots f_m(y_m)d\vec{y}\right|^2\frac{dv}{v}\right)^{1/ 2}g(x)dx,
\end{split}
\end{equation}
for all $f_1,\cdots,f_m,g$ in $\mathcal{S}$ with $\cap_{k=1}^m
\texttt{supp} f_k \cap \texttt{supp}\, g=\phi.$
There exists a
function $\phi\in C(\mathbb{R})$ with $\texttt{supp}\, \phi\in [-1,1] $ and a
constant $\varepsilon>0$ so that for every $j=0,1,\cdots,m$ and
every $i=1,2,\cdots,m,$ we have

\begin{equation}\label{H2}
\begin{split}
& \left(\int_0^\infty\left|K_v(x,y_1,\cdots,y_m)-K_{t,v}^{(i)}(x,y_1,\cdots,y_m)\right|^2\frac{dv}{v}\right)^{1/ 2}\\
&\leq \frac{A}{(|x-y_1|+\cdots +|x-y_m|)^{mn}}\sum_{k=1,k\neq i}^m
\phi(\frac{|y_i-y_k|}{t^{1/s}})+\frac{At^{\varepsilon/s}}{(|x-y_1|+\cdots
+|x-y_m|)^{mn+\varepsilon}}
\end{split}
\end{equation}
whenever $t^{1/s}\leq |x-y_i|/2.$
\par

\textbf{Assumption (H2)}\indent Assume that there exist operators $\{A_{t}\}_{t>0}$ with kernels $a_{t}(x,y)$ that satisfy condition \eqref{sizecondition1} and \eqref{smallcondition2} with constants $s$ and $\eta$, and there exist kernels $K_{t,v}^{(0)}(x,y_{1},\cdots,y_{m})$ such that for all $x,y_1,\cdots,y_m \in \mathbb{R}^{n}$ and $t>0$ the representation is valid\\
\begin{equation}\label{sizeforkernel}
\aligned
K_{t,v}^{(0)}(x,y_{1},\cdots,y_{m})=\int_{\mathbb{R}^{n}}K_v(z,y_{1},\cdots,y_{m})a_{t}(x,z)dz.
\endaligned
\end{equation}
Assume also that there exist a function $\phi \in \mathcal{C}(\mathbb{R})$ and $\phi \subset [-1,1]$ and a constant $\varepsilon>0$ such that
 \begin{equation}\label{sizeforkernel2}
\aligned
\left(\int_0^\infty \left|K_{t,v}^{(0)}(x,y_{1},\cdots,y_{m})\right|^2\frac{dv}{v}\right)^{1/ 2}\leq \frac{A}{(\sum_{k=1}^{m}|x-y_{k}|)^{mn}},
\endaligned
\end{equation}
whenever $2t^{1/s}\leq \min_{1\leq j \leq m}|x-y_{j}|$ and
 \begin{equation}\label{smoothnessforkernel1}
\aligned
&\left(\int_0^\infty \left|K_v(x,y_{1},\cdots,y_{m})-K_{t,v}^{(0)}(x,y_{1},\cdots,y_{m})\right|^2\frac{dv}{v}\right)^{1/ 2}\\
&\leq \frac{A}{(\sum_{k=1}^{m}|x-y_{k}|)^{mn}}\sum_{\stackrel{k=1}{k\neq j}}^{m}\phi\left(\frac{|x-y_{k}|}{t^{1/s}}\right)+\frac{At^{\varepsilon/s}}{(\sum_{k=1}^{m}|x-y_{k}|)^{mn+\varepsilon}}
\endaligned
\end{equation}
for some $A>0$, whenever $2t^{1/s}\leq \max_{1\leq j \leq m}|x-y_{j}|$.
\\

\textbf{Assumption (H3)}\indent Assume that there exist operators $\{A_{t}\}_{t>0}$ with kernels $a_{t}(x,y)$ that satisfy condition \eqref{sizecondition1} and \eqref{smallcondition2} with constants $s$ and $\eta$ and there exist kernels $K_{t,v}^{(0)}$ such that \eqref{sizeforkernel} holds. Also assume that there exist positive constant $A$ and $\varepsilon$ such that,
 \begin{equation}\label{smoothnessforkernel2}
\aligned
\left(\int_0^\infty \left| K_{t,v}^{(0)}(x,y_{1},\cdots,y_{m})-K_{t,v}^{(0)}(x^{\prime},y_{1},\cdots,y_{m})\right|^2\frac{dv}{v}\right)^{1/ 2}\leq\frac{At^{\varepsilon/s}}{(\sum_{k=1}^{m}|x-y_{k}|)^{mn+\varepsilon}},
\endaligned
\end{equation}
whenever $2t^{1/s}\leq \min_{1\leq j \leq m}|x-y_{j}|$ and $2|x-x^{\prime}|\leq t^{1/s}$.

%

Kernels $K_v$ that satisfying \eqref{size1},\eqref{H1} and \eqref{H2} with parameters $m,A,s,\eta,\varepsilon$ are called generalized square function kernels, and their collection is denoted by $ m-GSFK(A,s,\eta,\varepsilon)$. We say that $T$ is of class $ m-GSFO(A,s,\eta,\varepsilon)$ if T has an associated kernel $K_v$ in $ m-GSFK(A,s,\eta,\varepsilon)$.
%
%
%


\section{Endpoint estimate for $T$}
\label{Sect:Unweighted}
%

%
In this section we prove the endpoint estimate for multilinear generalized square function.
 \begin{theorem}\label{thm:endpoint}
  Let $T$ be a multilinear operator in $m-GSFO(A,s,\eta,\varepsilon)$.
  Then $T$ can be extended to a bounded operator from $L^1(\Rn)\times\ldots\times L^1(\Rn)$ into $L^{1/m,\infty}(\Rn)$ with bound
  \begin{equation*}
    ||T||_{L^1\times\ldots\times L^1 \to L^{1/m,\infty}}\leq C_{n,m} (A+||T||_{L^{p_1}\times \dots \times L^{p_m}\to L^{p,\infty}}).
  \end{equation*}
   \end{theorem}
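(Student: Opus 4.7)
The plan is to follow the multilinear Calder\'on--Zygmund decomposition of Grafakos--Torres \cite{Gra1}, adapted to non-smooth kernels in the spirit of Duong--Grafakos--Yan \cite{duonggong}, the new ingredient being that the $L^2(dv/v)$ integration defining the square function has to be threaded through the estimates by Minkowski's integral inequality. By multilinearity and scaling it is enough to normalize $\|f_i\|_1=1$ and prove $|\{x:T(\vec f)(x)>\la\}|\lesssim\la^{-1/m}$ with implicit constant $\lesssim A+\|T\|_{L^{p_1}\times\cdots\times L^{p_m}\to L^{p,\infty}}$.

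Fix a suitable height $\a\sim\la^{1/m}$ (its precise value, depending also on $\|T\|_{L^{\vec p}\to L^{p,\infty}}$, is chosen at the end to balance the good and bad contributions). For each $i$ apply the scalar Calder\'on--Zygmund decomposition to $f_i$ at level $\a$: write $f_i=g_i+b_i$ with $\|g_i\|_\infty\lesssim\a$, $b_i=\sum_j b_{i,j}$, $\supp b_{i,j}\subset Q_{i,j}$, $\|b_{i,j}\|_1\lesssim\a|Q_{i,j}|$, the $\{Q_{i,j}\}_j$ pairwise disjoint with $\sum_{i,j}|Q_{i,j}|\lesssim\a^{-1}$. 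Set $\O^*=\bigcup_{i,j}(8\sqrt n\,Q_{i,j})$, so $|\O^*|\lesssim\a^{-1}$, and it suffices to estimate $|\{x\notin\O^*:T(\vec f)(x)>\la\}|$. Multilinearity expands $T(f_1,\dots,f_m)=\sum_{\si\in\{g,b\}^m}T(\si_1,\dots,\si_m)$, and the $2^m$ terms are treated individually. For the all-good term $T(g_1,\dots,g_m)$, Chebyshev together with the hypothesis $L^{\vec p}\to L^{p,\infty}$ and the elementary bound $\|g_i\|_{p_i}\lesssim\a^{1-1/p_i}$ yields a contribution of the expected form $\lesssim\la^{-1/m}\|T\|_{L^{\vec p}\to L^{p,\infty}}$.

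For a mixed term with nonempty bad-index set $I\subset\{1,\dots,m\}$, pick $i_0\in I$ and decompose the $i_0$-slot as $b_{i_0}=\sum_j b_{i_0,j}$. For each $j$ set the approximation scale $t_j:=\ell(Q_{i_0,j})^s$ and split
\[
b_{i_0,j}=(I-A_{t_j}^{(i_0)})b_{i_0,j}+A_{t_j}^{(i_0)}b_{i_0,j}.
\]
The regularized piece $A_{t_j}^{(i_0)}b_{i_0,j}$ has $\|A_{t_j}^{(i_0)}b_{i_0,j}\|_\infty\lesssim\a$ directly from \eqref{sizecondition1} and $\|b_{i_0,j}\|_1\lesssim\a|Q_{i_0,j}|$, so after summing over $j$ this behaves as a ``good'' factor and is absorbed into the all-good estimate. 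The difference piece $(I-A_{t_j}^{(i_0)})b_{i_0,j}$ is controlled by hypotheses \eqref{H1}--\eqref{H2}: for $x\notin\O^*$ the inequality $t_j^{1/s}=\ell(Q_{i_0,j})\leq\tfrac12|x-y_{i_0}|$ holds for every $y_{i_0}\in Q_{i_0,j}$, so \eqref{H2} applies. Applying Minkowski's integral inequality to pull the $L^2(dv/v)$-norm inside the $\vec y$-integration, the square-function integrand is bounded pointwise in $\vec y$ by the $\phi$-localization term (which forces $|y_{i_0}-y_k|\leq t_j^{1/s}$, trivializing the integrals in the remaining slots against $\|f_k\|_1\leq 1$) plus the $t_j^{\ve/s}/(\sum_k|x-y_k|)^{mn+\ve}$ decay term (integrable against $\|b_{i_0,j}\|_1$). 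Summing in $j$ and integrating over $x\notin\O^*$ yields $\lesssim A\sum_j|Q_{i_0,j}|\lesssim A\a^{-1}\lesssim A\la^{-1/m}$. The bad factors in $I\setminus\{i_0\}$ are peeled off one at a time by the same device, at each stage matching $t$ to the current bad cube.

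The principal technical obstacle is the simultaneous management of several bad factors: the peeling has to be performed one factor at a time, and one must verify at each stage that the geometric condition $2t^{1/s}\leq\max_j|x-y_j|$ from (H2) continues to hold so that the kernel regularity may be invoked. A second source of care is that the $L^2(dv/v)$-norm does not commute with $L^1$-integration in the $y_k$; Minkowski's integral inequality must be applied so that the square-function norm is taken \emph{pointwise} in $\vec y$ before one integrates against $|f_k(y_k)|$, at which point \eqref{H2} delivers the pointwise size-and-decay estimates that reduce the argument to classical Calder\'on--Zygmund bookkeeping. Balancing the three contributions (exceptional set, all-good term, and mixed terms) by the appropriate choice of $\a$ produces the asserted linear dependence on $A+\|T\|_{L^{\vec p}\to L^{p,\infty}}$.
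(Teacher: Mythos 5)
Your outline coincides with the paper's strategy (Calder\'on--Zygmund decomposition at height $\sim\lambda^{1/m}$, the $2^m$ good/bad splitting, Chebyshev plus the assumed $L^{p_1}\times\cdots\times L^{p_m}\to L^{p,\infty}$ bound for the all-good term, regularization of a bad slot by $A_t$ at the scale of its cube, and Assumption (H1)/\eqref{H2} for the difference term), but the actual estimates you propose for the terms containing bad functions do not close, and the missing pieces are exactly the heart of the paper's proof. First, the claim that after invoking \eqref{H2} the remaining slots can be ``trivialized against $\|f_k\|_1\le 1$'' destroys the decay in $x$ that the argument needs. Taking even the simplest mixed term (one bad slot, $m-1$ good slots) and bounding the kernel uniformly in the other variables, the $\varepsilon$-decay part of the majorant becomes
\begin{equation*}
A\sum_{j}\frac{\ell(Q_{i_0,j})^{\varepsilon}\,\|b_{i_0,j}\|_{1}}{|x-c_{i_0,j}|^{mn+\varepsilon}},
\qquad x\notin\Omega^*,
\end{equation*}
whose integral over $(\Omega^*)^c$ is comparable to $A\alpha\sum_j \ell(Q_{i_0,j})^{(2-m)n}$; for $m\ge 2$ this is not controlled by $\sum_j|Q_{i_0,j}|$ at all (and the $\phi$-term is worse, since the bad functions are unbounded so $\int_{|y_k-y_{i_0}|\le t_j^{1/s}}|b_{k,k'}|$ cannot be bounded by $\alpha t_j^{n/s}$). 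The paper avoids this by distributing the kernel decay over all $m$ slots: good slots are estimated by $Mg_i(x)\lesssim(\alpha\lambda)^{1/m}$ rather than by $\|g_i\|_1$, and each bad slot produces a Marcinkiewicz-type sum ($M_{j,1/m}$ in \eqref{Marc}, or $\mathcal J_{j,\epsilon}$), whose integral over $(\Omega^*)^c$ is $\sum_j|Q_{j,k_j}|$ by the estimate quoted from \cite{MN}.

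Second, when two or more slots are bad the resulting majorant is a \emph{product} of such weak-$L^1$-type sums, which is not integrable, so no $L^1$-Chebyshev argument of the kind you sketch can work; the paper's proof runs the level-set estimate through a fractional-power Chebyshev inequality plus H\"older (the exponent $1/\ell$ in \eqref{eq:e9} and the exponent $2/(m-1)$ with the $L^2$ bounds \eqref{eq:e12}--\eqref{eq:e15}). Equally essential, and absent from your proposal, is the ordering device $\Theta_j$: in each tuple of bad cubes one regularizes precisely the slot whose cube has \emph{smallest} side length, which is what makes the $\phi$-localization term controllable in the presence of several unbounded bad factors (this is the estimate imported from \cite[p.\,2100]{DGY}); ``peeling off the bad factors one at a time by the same device'' is asserted but never shown, and it is exactly where the difficulty sits. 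Finally, the regularized piece is not simply ``absorbed into the all-good estimate'': $\|A_{t_j}^{(i_0)}b_{i_0,j}\|_\infty\lesssim\alpha$ for each fixed $j$, but the kernels $a_t$ only decay like $(1+t^{-1/s}|x-y|)^{-n-\eta}$, so the sum over $j$ need not be uniformly bounded; the paper treats this part ($T^{(1,2)}$) via the $L^p$ bounds \eqref{eq:e222} for $\mathcal J_{j,\epsilon}$ together with the assumed $L^{q_1}\times\cdots\times L^{q_m}\to L^{q}$ boundedness of $T$. So while the skeleton is right, the multi-bad-slot bookkeeping --- smallest-cube selection, Marcinkiewicz-type majorants, and fractional-power Chebyshev/H\"older --- is missing, and the shortcuts offered in its place are quantitatively false for $m\ge2$.
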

\begin{proof}[Proof of Theorem \ref{thm:endpoint}]

For simplicity, we write $\|T\|=\|T\|_{{L^{p_1}\times \dots \times L^{p_m}\to L^{p,\infty}}}$.
  Let $\vec f=(f_1,\dots, f_m)$.
    By homogeneity,  we may assume that each $f_i\in L^1(\Rn)$ and
    $\|f_i\|_{L^1(\Rn)}=1$. It suffices to prove that for any $\la>0$,
  \begin{eqnarray}
  |\{x\in\Rn:\, |T(\vec f\,)(x)|>2^m\la\}| \le C (A+ \|T\|)^{1/m}\,\la^{-1/m}.
  \end{eqnarray}

For  $j=1,2,\dots,m$, we consider the Calder\'on--Zygmund decomposition of each function $f_j$
at level $(\alpha\la)^{1/m}$, where $\alpha$ is  a positive constant to be determined later. Then, each $f_j$ has the decomposition
$$f_j=g_j+b_j=g_j+\sum_{k\in I_j} b_{j,k}$$
with $I_j$ being some index set, such that
\begin{enumerate}
\item[(i)] $|g_j(x)|\le C (\alpha\la)^{1/m}$ for all $x\in\Rn$;
\item[(ii)] there exists a sequence of mutually disjoint cubes $\{Q_{j,k}\}_{k\in I_j}$ such that $\supp b_{j,k}\subset Q_{j,k}$, $\int_{\Rn} b_{j,k}(x)\,dx=0$ and
    $\|b_{j,k}\|_{L^1(\Rn)}\le C (\alpha\la)^{1/m}\,|Q_{j,k}|;$
\item[(iii)] $\sum_{k\in I_j} |Q_{j,k}| \le C (\alpha\la)^{-1/m}$.
\end{enumerate}
It follows from (ii) and (iii) that
$$\|b_j\|_{L^1(\Rn)}\le C$$
and hence
\begin{eqnarray}\label{eq:xx1}
\|g_j\|_{L^1(\Rn)}\le \|f_j\|_{L^1(\Rn)}+\|b_j\|_{L^1(\Rn)}\le 1+C.
\end{eqnarray}
Further,
\begin{eqnarray}\label{eq:e1}
\|g_j\|_{L^{p_j}(\Rn)}\le (1+C)^{1/{p_j}}[C (\alpha\la)^{1/m}]^{(p_j-1)/p_j}
= C (\alpha\la)^{1/(m p_j')}.
\end{eqnarray}
Let
\begin{eqnarray*}
E_\la^{(1)}&&:= \{x\in\Rn:\, |T(g_1,g_2,\dots, g_m)(x)|>\la\};\\
E_\la^{(2)}&&:= \{x\in\Rn:\, |T(b_1,g_2,\dots, g_m)(x)|>\la\};\\
E_\la^{(3)}&&:= \{x\in\Rn:\, |T(g_1,b_2, g_3, \dots, g_m)(x)|>\la\};\\
&&\vdots\\
E_\la^{(2^m)}&&:= \{x\in\Rn:\, |T(b_1,b_2,\dots, b_m)(x)|>\la\}.\\
\end{eqnarray*}
That is, for $1\le i\le 2^m$,
$$
E_\la^{(i)}= \{x\in\Rn:\, |T(h_1,h_2,\dots, h_m)(x)|>\la\},\qquad h_j\in\{g_j,\, b_j\}
$$
and all the sets $E_\la^{(s)}$ are distinct. Thus,
\begin{eqnarray*}
 |\{x\in\Rn:\, |T(\vec f\,)(x)|>2^m\la\}| \le  \sum_{i=1}^{2^m} |E_\la^{(i)}|,
\end{eqnarray*}
and we only need to prove that for $1\le i\le 2^m$ and $\la>0$,
\begin{eqnarray}\label{eq:e2}
|E_\la^{(i)}|\le C (A+ \|T\|)^{1/m}\,\la^{-1/m}.
\end{eqnarray}

First, we estimate $|E_\la^{(1)}|$. By the $ L^{p_1}(\Rn)\times \dots \times L^{p_m}(\Rn)\to L^{p,\infty}(\Rn)$ boundedness of $T$ and Chebychev's inequality, one gets that
\begin{eqnarray}\label{eq:e3}
|E_\la^{(1)}|
&&\le \left(\frac{\|T\|\, \prod_{j=1}^m\|g_j\|_{L^{p_j}(\Rn)}}{\la}\right)^p\\
&&\le C \left(\frac{\|T\|\, (\alpha\la)^{\frac1m \sum_{j=1}^m\frac1{ p_j'}}}{\la}\right)^p = C \|T\|^p\alpha^{p-\frac1m} \,\la^{-\frac 1m}. \notag
\end{eqnarray}

Now  we estimate $|E_\la^{(i)}|$ for $2\le i\le 2^m$. Suppose that for some $1\le \ell \le m$ we have $\ell$ bad functions and $m-\ell$ good functions appearing in $T(h_1, \dots, h_m)$, where $h_j\in\{g_j, b_j\}$.
It suffices to prove that
\begin{eqnarray}\label{eq:e4}
|E_\la^{(i)}| \le C\,\la^{-\frac 1m}\,\left[\|T\|^p \alpha^{p-\frac1m}  + \alpha^{-\frac 1m}+ A \alpha^{1-\frac 1m}\right].
\end{eqnarray}
Indeed, once we have \eqref{eq:e4}, then combining with \eqref{eq:e3} and selecting $\alpha=(\|T\|+A)^{-1}$, we get \eqref{eq:e2}.

Now we prove \eqref{eq:e4}. For each cube $Q_{j,k}$ obtained in the Calder\'on-Zygmund decomposition in (ii),
we denote by $Q_{j,k}^\ast$ the cube with the same center as $Q_{j,k}$ but
$5\sqrt{n}$ times the side length of that of $Q_{j,k}$. By (iii), we have
\begin{eqnarray*}
\left|\bigcup_{j=1}^m \bigcup_{k\in I_j} Q_{j,k}^\ast\right|
\le \sum_{j=1}^m\,\sum_{k\in I_j} |Q_{j,k}^\ast|
\le Cm  (5\sqrt{n})^n(\alpha\la)^{-1/m}.
\end{eqnarray*}
Thus, to get \eqref{eq:e4}, it suffices to prove that
\begin{eqnarray}\label{eq:e5}
&&\left|\left\{x\notin \bigcup_{j=1}^m \bigcup_{k\in I_j} Q_{j,k}^\ast:\, |T(h_1,\dots, h_m)(x)|>\la\right\}\right|\\
&&\qquad\le C\,\la^{-\frac 1m}\,\left[\|T\|^p \alpha^{p-\frac1m}  + \alpha^{-\frac 1m}+ A \alpha^{1-\frac 1m}\right], \qquad h_j\in\{g_j, b_j\}. \notag
\end{eqnarray}
Without loss of generality, we may assume that
$$h_j=\begin{cases}
b_j,\qquad & j=1,\dots,\ell;\\
g_j,\qquad &j=\ell+1,\dots, m.
\end{cases}$$
 Fix  $x\notin \bigcup_{j=1}^m \bigcup_{k\in I_j} Q_{j,k}^\ast$.
Then, by the Calder\'on-Zygmund decomposition, we write
\begin{eqnarray}\label{eq:e6}
&&T(b_1,\dots, b_\ell, g_{\ell+1},\dots, g_m)(x)\\
&&\quad= \sum_{k_1,k_2,\dots, k_\ell} T(b_{1, k_1}\,,\dots, b_{\ell, k_\ell},\, g_{\ell+1},\dots, g_m)(x)\notag\\
&&\quad= \sum_{j=1}^\ell \left(\sum_{\substack{k_1,\dots,k_{j-1} \\ k_{j+1},\dots, k_\ell}}\;\sum_{k_j\in \Theta_j} T(b_{1, k_1}\,,\dots, b_{\ell, k_\ell},\, g_{\ell+1},\dots, g_m)(x)\right) \notag\\
&&\quad=:\sum_{j=1}^\ell T^{(j)}(b_1,\dots, b_\ell, g_{\ell+1},\dots, g_m)(x), \notag
\end{eqnarray}
where, for $1\le j\le\ell$, we recall that  $\supp b_{j,k_j}\subset Q_{j,k_j}$ and define
\begin{equation*}\begin{split}
\Theta_j:=\{k_j\in I_j:\,
&\ell(Q_{i, k_i}) >\ell(Q_{j,k_j})\; \textup{for}\;1\le i\le j-1, \\ &
\ell(Q_{j,k_j})\le \ell(Q_{i, k_i})\; \textup{for}\;j+1\le i\le \ell\}.
\end{split}\end{equation*}
Notice that $\Theta_j$ is given by collecting all the indices $k_j\in I_j$
such that the side length of $Q_{j,k_j}$
is the first smallest among those of $\{Q_{1, k_1},\dots, Q_{\ell, k_\ell}\}$.
Therefore, the proof of \eqref{eq:e5} can be reduced to the following estimate:\,
\begin{eqnarray}\label{eq:e7}
&&\left|\left\{x\notin \bigcup_{i=1}^\ell \bigcup_{k\in I_i} Q_{i,k}^\ast:\, |T^{(j)}(b_1,\dots, b_\ell, g_{\ell+1},\dots, g_m)(x)|>\la\right\}\right|\\
&&\qquad\le C\,\la^{-\frac 1m}\,\left[\|T\|^p \alpha^{p-\frac1m}  + \alpha^{-\frac 1m}+ A \alpha^{1-\frac 1m}\right], \qquad 1\le j\le \ell. \notag
\end{eqnarray}

Without loss of generality, we may consider only the case $j=1$ in \eqref{eq:e7}.
Choose $t_{1,k_1}=(\sqrt{n}\ell(Q_{1,k_1}))^s$, where $s$ is the constant appearing in
\eqref{sizecondition1}. Write
\begin{eqnarray*}
&&T^{(1)}(b_1,\dots, b_\ell, g_{\ell+1},\dots, g_m)(x)\\
&&\quad= \sum_{k_2,\dots, k_\ell}\;\sum_{k_1\in \Theta_1} T(b_{1, k_1}\,,b_{2, k_2},\,\dots, b_{\ell, k_\ell},\, g_{\ell+1},\dots, g_m)(x)\\
&&\quad =
\sum_{k_2,\dots, k_\ell}\sum_{k_1\in \Theta_1} T(b_{1, k_1}-A_{t_{1,k_1}}^{(1)}b_{1, k_1}\,,b_{2, k_2},\,\dots, b_{\ell, k_\ell},\, g_{\ell+1},\dots, g_m)(x)\\
&&\qquad +\sum_{k_2,\dots, k_\ell}\sum_{k_1\in \Theta_1}T(A_{t_{1,k_1}}^{(1)}b_{1, k_1}\,,b_{2, k_2},\,\dots, b_{\ell, k_\ell},\, g_{\ell+1},\dots, g_m)(x)\\
&&\quad=: T^{(1,1)}(b_1,\dots, b_\ell, g_{\ell+1},\dots, g_m)(x)+T^{(1,2)}(b_1,\dots, b_\ell, g_{\ell+1},\dots, g_m)(x).
\end{eqnarray*}
Consider first the operator $ T^{(1,1)}$.  We get
\begin{eqnarray*}
&& |T^{(1,1)}(b_1,\dots, b_\ell, g_{\ell+1},\dots, g_m)(x)|\\
&&\quad \le \sum_{k_2,\dots, k_\ell}\;\sum_{k_1\in \Theta_1} |T(b_{1, k_1}-A_{t_{1,k_1}}^{(1)}b_{1, k_1}\,,b_{2, k_2},\,\dots, b_{\ell, k_\ell},\, g_{\ell+1},\dots, g_m)(x)|\\
&&\quad= \sum_{k_2,\dots, k_\ell}\;\sum_{k_1\in \Theta_1} \bigg(\int_{0}^{\infty}\bigg|
\int_{(\Rn)^m} [K_v(x, y_1,\dots, y_m)- K_{t_{1,k_1},v}^{(1)}(x, y_1,\dots, y_m)]\\
&&\quad\quad\quad\times\prod_{j=1}^\ell |b_{j, k_j}(y_j)|\;  \prod_{i=\ell+1}^m |g_{i}(y_i)| \,d\vec y\bigg|^2 \frac{dv}{v}\bigg)^{\frac{1}{2}}\\
&&\quad\le A \sum_{k_2,\dots, k_\ell}\;\sum_{k_1\in \Theta_1}\int_{(\Rn)^m} \left(\int_{0}^{\infty}
 \left|K_v(x, y_1,\dots, y_m)- K_{t_{1,k_1},v}^{(1)}(x, y_1,\dots, y_m)\right|^2 \frac{dv}{v}\right)^{\frac{1}{2}}\\
 &&\quad\quad\quad\times \;\prod_{j=1}^\ell |b_{j, k_j}(y_j)|\;  \prod_{i=\ell+1}^m |g_{i}(y_i)| \,d\vec y\\
&&\quad\le A \sum_{k_2,\dots, k_\ell}\;\sum_{k_1\in \Theta_1} \int_{(\Rn)^m} \frac{\left(\frac{(t_{1,k_1})^{1/s}}{\sum_{i=1}^m |x-y_i|}\right)^{\varepsilon}}{(\sum_{i=1}^m |x-y_i|)^{mn}}\;\prod_{j=1}^\ell |b_{j, k_j}(y_j)|\;  \prod_{i=\ell+1}^m |g_{i}(y_i)|\,d\vec y\\
&&\quad + A
\sum_{u=2}^m \sum_{k_2,\dots, k_\ell}\;\sum_{k_1\in \Theta_1}\int_{(\Rn)^m} \frac{\phi\left(\frac{|y_1-y_u|}{(t_{1,k_1})^{1/s}}\right)}{(\sum_{i=1}^m |x-y_i|)^{mn}}
\;\prod_{j=1}^\ell |b_{j, k_j}(y_j)|\;  \prod_{i=\ell+1}^m |g_{i}(y_i)|\,d\vec y\\
&&\quad=: Y_1(x)+Y_2(x),
\end{eqnarray*}
where we have used Assumption (H1) since $|x-y_1|\geq 2 t_{1,k_1}^{1/s}$.  For $Y_1(x)$, by the definitions of $t_{1,k_1}$ and $\Theta_1$, we see that
\begin{eqnarray*}
Y_1(x)
&&\le  A
\int_{(\Rn)^m} \;\prod_{j=1}^\ell \left(\sum_{k_j\in I_j}\;\frac{\left[\frac{\ell(Q_{j,k_j})}{ |x-y_j|}\right]^{\varepsilon/m}\;|b_{j, k_j}(y_j)|}{ |x-y_j|^{n}}\right)\; \\
&&\qquad \times
 \prod_{i=\ell+1}^m
  \left(\frac{\left[\frac{\ell(Q_{1,k_1})}{ \ell(Q_{1,k_1})+|x-y_i|}\right]^{\varepsilon/m}\;|g_{i}(y_i)|}{(\ell(Q_{1,k_1})+|x-y_i|)^{n}}\right)\,d\vec y.
\end{eqnarray*}
Therefore we obtain that for $\ell+1\le i\le m$,
$$
\int_{\Rn}
\frac{\left[\frac{\ell(Q_{1,k_1})}{ \ell(Q_{1,k_1})+|x-y_i|}\right]^{\varepsilon/m}\;|g_{i}(y_i)|}
{(\ell(Q_{1,k_1})+|x-y_i|)^{n}}\,dy_i
\le C M(g_i)(x)\le C (\alpha \lambda)^{1/m}.
$$
Recall that we have assumed that $x\notin \bigcup_{i=1}^m \bigcup_{k\in I_i} Q_{i,k}^\ast$.
Consequently, $x\notin Q_{j,k_j}^\ast$ for each $1\le j\le \ell$.
For such an $x$ and all $y\in Q_{j,k_j}$, it is easy to see that
$$ \frac 45|x-c_{j,k_j}|\le |x-y| \le \frac 43|x-c_{j,k_j}|,$$
where $c_{j,k_j}$ denotes the center of the cube $Q_{j,k_j}$.
Furthermore, we have
\begin{eqnarray*}
&&\int_{\Rn} \sum_{k_j\in I_j}\;\frac{\left[\frac{\ell(Q_{j,k_j})}{ |x-y_j|}\right]^{\varepsilon/m}\;|b_{j, k_j}(y_j)|}{ |x-y_j|^{n}}\,dy_j\\
&&\quad\le C \sum_{k_j\in I_j}\;\frac{\left[\frac{\ell(Q_{j,k_j})}{ \frac{4}{5}|x-c_{j,k_j}|}\right]^{\varepsilon/m}\;\|b_{j, k_j}\|_{L^1(\Rn)}}{ |x-c_{j,k_j}|^{n}}\\
&&\quad\le C (\alpha\la)^{1/m}\sum_{k_j\in I_j}\;\left[\frac{\ell(Q_{j,k_j})}{ \frac{4}{5}|x-c_{j,k_j}|}\right]^{\varepsilon/m}
\frac{\;|Q_{j, k_j}|}{ |x-c_{j,k_j}|^{n}}.
\end{eqnarray*}
Define the function
\begin{equation}\label{Marc}
M_{j, 1/m}(x):= \sum_{k_j\in I_j}\;\left[\frac{\ell(Q_{j,k_j})}{ \frac{4}{5}|x-c_{j,k_j}|}\right]^{\varepsilon/m}
\frac{\;|Q_{j, k_j}|}{ |x-c_{j,k_j}|^{n}}.
\end{equation}
Then, we have proved that
\begin{equation}\label{eq:Y1}
Y_1(x)\le C A \alpha\lambda \prod_{j=1}^\ell M_{j, 1/m}(x).
\end{equation}
Next, we shall use the following estimate in \cite[p. 240]{MN}:\,
\begin{equation}
\int_{x\notin  \bigcup_{i=1}^m \bigcup_{k\in I_i} Q_{i,k}^\ast}
M_{j, 1/m}(x)\,dx \le C \sum_{k_j\in I_j} |Q_{j,k_j}|.
\end{equation}
By this and H\"older's inequality, we get
\begin{eqnarray}\label{eq:e9}
&&\left|\left\{x\notin \bigcup_{i=1}^\ell \bigcup_{k\in I_i} Q_{i,k}^\ast:\, |Y_1(x)|>\la/4\right\}\right|\\
&&\quad \le \left|\left\{x\notin \bigcup_{i=1}^\ell \bigcup_{k\in I_i} Q_{i,k}^\ast:\, A \alpha\prod_{j=1}^\ell M_{j, 1/m}(x)>C\right\}\right|\notag\\
&&\quad\le C
\int_{x\notin  \bigcup_{i=1}^m \bigcup_{k\in I_i} Q_{i,k}^\ast}
\left( A \alpha\prod_{j=1}^\ell M_{j, 1/m}(x)\right)^{1/\ell}\,dx\notag\\
&&\quad \le C
\left(A \alpha \prod_{j=1}^\ell \int_{x\notin  \bigcup_{i=1}^m \bigcup_{k\in I_i} Q_{i,k}^\ast}
 M_{j, 1/m}(x)\,dx\right)^{1/\ell}\notag\\
 &&\quad \le C (A \alpha )^{1/\ell} \left(\prod_{j=1}^\ell
 \sum_{k_j\in I_j} |Q_{j,k_j}| \right)^{1/\ell} \notag\\
 &&\quad \le C (\alpha \la)^{-1/m}, \notag
\end{eqnarray}
where the last step is by (iii) and the fact $A\alpha<1$.

Now we consider  $Y_2(x)$. Indeed, the estimate of $Y_2$ is exactly the term $T_2^{(11)}$ in \cite[p.\,2100]{DGY}, which provides us the following estimate:
\begin{equation*}\begin{split}
  Y_2(x)
\le C A (\alpha\lambda)^{\ell/m} &\sum_{k=\ell+1}^m
\left(\prod_{j=2}^\ell \mathcal J_{j, \frac{n}{m-1}} (x) \right)\;
\left(\prod_{i\neq k,\; \ell+1\le i\le m}
M(g_i)(x)\right)\; \\ &\times\left(
M(g_k)(x)+(\alpha\lambda)^{1/m} \mathcal J_{1, \frac{n}{m-1}} (x)
\right),
\end{split}\end{equation*}
where for any $\epsilon>0$ and $1\le j\le m$ the function $\mathcal J_{j,\epsilon}(x)$
is given by
\begin{eqnarray*}
\mathcal J_{j,\epsilon}(x)
= \sum_{k_j\in I_j} \frac{[\ell(Q_{j,k_j})]^{n+\epsilon}}
{[\ell(Q_{j,k_j})+|x-c_{j, k_j}|]^{n+\epsilon}}
\end{eqnarray*}
and $M$  is usual the Hardy-Littlewood maximal operator.
\par
It is known from \cite[(2.6)]{DGY} that for any $p\in(n/(n+\epsilon),\infty)$,
\begin{equation}\label{eq:e222}
\|\mathcal J_{j,\epsilon}\|_{L^p(\Rn)} \le C \left(
\sum_{k_j\in I_j}|Q_{j,k_j}|\right)^{1/p}.
\end{equation}
By (iii), we further have that
\begin{equation}\label{eq:e12}
\|\mathcal J_{j,\epsilon}\|_{L^2(\Rn)} \le C \left(
\sum_{k_j\in I_j}|Q_{j,k_j}|\right)^{1/2}
\le C (\alpha\la)^{-1/(2m)}.
\end{equation}
By the $L^2$-boundedness of $M$, property (i) and \eqref{eq:xx1}, we have that for all $1\le i\le m$,
\begin{equation}\label{eq:e13}
\|M(g_i)\|_{L^2(\Rn)}
\le C\|g_i\|_{L^2(\Rn)} \le C (\alpha\la)^{1/(2m)} \|g_i\|_{L^1(\Rn)}^{1/2}
\le C (\alpha\la)^{1/(2m)}.
\end{equation}
From \eqref{eq:e12} and \eqref{eq:e13}, it follows that
\begin{eqnarray*}
&&\left|\left\{x\notin \bigcup_{i=1}^\ell \bigcup_{k\in I_i} Q_{i,k}^\ast:\, |Y_2(x)|>\la/4\right\}\right|\\
&&\quad \le \int_{\Rn}  \left(\frac{Y_2(x)}{\la/4}\right)^{\frac 2{m-1}}\,dx \notag\\
&&\quad \le
C
\left(\frac{A (\alpha\lambda)^{\ell/m} }{\la}\right)^{\frac 2{m-1}}
\sum_{k=\ell+1}^m
\int_{\Rn}
\left(\prod_{j=2}^\ell \mathcal J_{j, \frac{n}{m-1}} (x) \right)^{\frac 2{m-1}}\;\notag\\
&&\qquad\times
\left(\prod_{i\neq k,\; \ell+1\le i\le m}
M(g_i)(x)\right)^{\frac 2{m-1}}\; \left(
M(g_k)(x)+(\alpha\lambda)^{1/m} \mathcal J_{1, \frac{n}{m-1}} (x)
\right)^{\frac 2{m-1}}\,dx\notag.
\end{eqnarray*}
In the last formula, there are $m-1$ factors inside the integrand.
Since $\ell\ge1$, so for the special case $m=2$,   we have only the last factor
$(
M(g_k)(x)+(\alpha\lambda)^{1/m} \mathcal J_{1, \frac{n}{m-1}} (x)
)^2$ inside the integrand.
We shall use the inequality:
\begin{equation}\label{eq:e14}
\int_{\Rn} \left[F_1(x) \cdots F_{m-1}(x)\right]^{\frac 2{m-1}}\,dx
\le \prod_{j=1}^{m-1}\left(\int_{\Rn} [F_j(x)]^2\,dx\right)^{\frac1{m-1}}.
\end{equation}
This inequality holds with ``$=$" when  $m=2$,
and it follows from
 H\"older's inequality when $m\ge3$.
 Therefore, by \eqref{eq:e14}, we have
 \begin{eqnarray}\label{eq:e15}
&&\left|\left\{x\notin \bigcup_{i=1}^\ell \bigcup_{k\in I_i} Q_{i,k}^\ast:\, |Y_2(x)|>\la/4\right\}\right|\\
&& \quad\le
C
\left(\frac{A (\alpha\lambda)^{\ell/m} }{\la}\right)^{\frac 2{m-1}}
\sum_{k=\ell+1}^m
\left(\prod_{j=2}^\ell \| \mathcal J_{j, \frac{n}{m-1}}\|_{L^2(\Rn)}^{\frac{2}{m-1}} \right) \notag\\
&&\qquad\times\left(\prod_{i\neq k,\; \ell+1\le i\le m} \| M(g_i)\|_{L^2(\Rn)}^{\frac{2}{m-1}}  \right)\notag\\
&&\qquad\times
\left(
\|M(g_k)\|_{L^2(\Rn)} +(\alpha\lambda)^{1/m}
\| \mathcal J_{1, \frac{n}{m-1}}\|_{L^2(\Rn)}
\right)^{\frac{2}{m-1}}  \notag\\
&&\quad \le C
\left(\frac{A (\alpha\lambda)^{\ell/m} }{\la}\right)^{\frac 2{m-1}}
\left( (\alpha\lambda)^{-\frac{\ell-1}{2m}}\;\cdot (\alpha\lambda)^{\frac{m-\ell-1}{2m}} \;\cdot
(\alpha\lambda)^{\frac{1}{2m}} \right)^{\frac 2{m-1}}
\notag\\
&&\quad
= C  (A\alpha)^{\frac 2{m-1}} (\alpha\la)^{-1/m}, \notag
\end{eqnarray}
which is bounded by $CA\alpha (\alpha\la)^{-1/m}$ since $A\alpha<1$ and $\frac 2{m-1}\ge1$.
\par The relations \eqref{eq:e9} and \eqref{eq:e15} together yield that
\begin{eqnarray*}
&&\left|\left\{x\notin \bigcup_{i=1}^\ell \bigcup_{k\in I_i} Q_{i,k}^\ast:\, |T^{(1,1)}(b_1,\dots, b_\ell, g_{\ell+1},\dots, g_m)(x)|>\la/2\right\}\right| \le C\,(\la \alpha)^{-\frac 1m}\,\left[1+ A \alpha\right]. \notag
\end{eqnarray*}
Thus, to obtain \eqref{eq:e7}, it remains to prove
\begin{eqnarray}\label{eq:e16}
\;\left|\left\{x\notin \bigcup_{i=1}^\ell \bigcup_{k\in I_i} Q_{i,k}^\ast:\, |T^{(1,2)}(b_1,\dots, b_\ell, g_{\ell+1},\dots, g_m)(x)|>\la/2\right\}\right|\le C(\alpha \lambda)^{-1/m}.
\end{eqnarray}
To handle the rest of the proof we use \eqref{eq:e222}, Chebychev's inequality and the $L^{q_1}\times \cdots \times L^{q_m}\rightarrow L^q$ boundedness of $T$ to get the desired result.   This concludes the proof of the Theorem.
\end{proof}

\section{Weighted  estimates for $T$}
%

In this section, we will study the multiple-weighted normal inequalities and weak-type estimates.
%
Our main results in this section can be stated as follows.
\begin{theorem}\label{thm:22}
 Let $T$ be a multilinear operator in $m-GSFO(A,s,\eta,\varepsilon)$ with a kernel  satisfies Assumption (H2). Let $\frac{1}{p}=\frac{1}{p_1}+\ldots+\frac{1}{p_m}$ for $1\leq p_1,\ldots,p_{m}< \infty$ and $\omega\in A_p$ with $p\geq 1.$ , then the following hold:

 \begin{enumerate}
   \item If there is no $p_i=1$, then $\|T(\vec{f}) \|_{L^{p}(\omega)}\leq C \prod_{i=1}^{m} \|f_i\|_{L^{p_i}(\omega)} $\\
   \item If there is a $p_i=1$, then $\|T(\vec{f}) \|_{L^{p,\infty}(\omega)}\leq C \prod_{i=1}^{m} \|f_i\|_{L^{p_i}(\omega)} $
 \end{enumerate}
\end{theorem}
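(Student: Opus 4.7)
The plan is to reduce Theorem \ref{thm:22} to a pointwise sharp maximal function estimate
\[
M^{\sharp}_\delta(T(\vec f))(x)\le C\,\mathcal{M}(\vec f)(x),\qquad 0<\delta<1/m,
\]
where $M^{\sharp}_\delta g = [M^{\sharp}(|g|^\delta)]^{1/\delta}$ denotes the Fefferman--Stein sharp maximal function and $\mathcal{M}(\vec f)(x)=\sup_{Q\ni x}\prod_{i=1}^m|Q|^{-1}\int_Q|f_i(y)|\,dy$ is the multi(sub)linear Hardy--Littlewood maximal function. Granted this estimate, part~(1) follows from the Fefferman--Stein inequality $\|g\|_{L^p(\omega)}\le C\,\|M^{\sharp}_\delta g\|_{L^p(\omega)}$ valid for $\omega\in A_\infty$, together with the mapping $\mathcal{M}\colon L^{p_1}(\omega)\times\cdots\times L^{p_m}(\omega)\to L^p(\omega)$, which is a consequence of $\omega\in A_{p_i}$ for each $i$ (ensured by $\omega\in A_p$ since $p\le p_i$) and H\"older's inequality. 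Part~(2) is obtained by the analogous argument using the weak-type version of the Fefferman--Stein inequality and the weak-type endpoint for $\mathcal{M}$.

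The crux of the proof is therefore the sharp maximal estimate. Fix a cube $Q\ni x$ with centre $c_Q$ and set $t_Q=(\ell(Q))^s$, where $s$ is the scaling exponent from \eqref{sizecondition1}. Decompose each $f_i=f_i^0+f_i^\infty$ with $f_i^0=f_i\chi_{Q^\ast}$ and $Q^\ast$ a suitable fixed dilate of $Q$, and expand
\[
T(\vec f)=\sum_{\alpha\in\{0,\infty\}^m}T(f_1^{\alpha_1},\dots,f_m^{\alpha_m}).
\]
For the local piece $\alpha=(0,\dots,0)$, Kolmogorov's inequality together with the endpoint bound of Theorem~\ref{thm:endpoint} yields
\[
\Bigl(\tfrac1{|Q|}\int_Q|T(f_1^0,\dots,f_m^0)|^\delta\,dy\Bigr)^{1/\delta}\le C\prod_{i=1}^m\tfrac1{|Q^\ast|}\int_{Q^\ast}|f_i|\le C\,\mathcal{M}(\vec f)(x).
\]
For each of the remaining $2^m-1$ pieces, in which at least one factor is $f_i^\infty$, the natural constant to subtract is of the form $c_Q(\alpha)=T(\wt f_1^{\alpha_1},\dots,\wt f_m^{\alpha_m})(c_Q)$ where $\wt f_i^{\alpha_i}$ stands for either $f_i^0$ or $A_{t_Q}f_i^\infty$, with $A_t$ the approximation to the identity in Assumption~(H2). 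The resulting differences are then bounded by using \eqref{sizeforkernel2} together with the kernel-smoothness estimate \eqref{smoothnessforkernel1}, after applying Minkowski's integral inequality to pull the $L^2(dv/v)$-norm inside the spatial integrals; an annular decomposition around $c_Q$ exploits either the decay factor $t_Q^{\varepsilon/s}/(|x-y|)^{mn+\varepsilon}$ or the localization afforded by the compact support of $\phi$, and each term is thereby controlled by $C\,\mathcal{M}(\vec f)(x)$.

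The principal technical obstacle is reconciling the $L^2(dv/v)$ structure of the square function with the absence of pointwise smoothness of $K_v(x,\vec y)$ in $x$: since Assumption~(H2) is the only available substitute for the usual H\"older regularity, one must insert $A_{t_Q}$ at precisely the right slot, invoke Minkowski at the correct moment so that the kernel-difference bound \eqref{smoothnessforkernel1} can be used pointwise, and then organize the remaining spatial integrals along dyadic annuli $2^k\ell(Q)\le |x-y|<2^{k+1}\ell(Q)$, combining the $\varepsilon$-decay of the tail with the support property $\supp \phi\subset[-1,1]$ to sum the geometric series. Once the pointwise sharp maximal inequality is established, the rest of the theorem reduces to standard weighted theory as outlined in the first paragraph.
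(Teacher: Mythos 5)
Your overall route is the same as the paper's: the paper proves Theorem \ref{thm:22} exactly by the scheme you describe, namely a pointwise sharp maximal estimate (its Lemma \ref{lw3}), the Fefferman--Stein inequality of Lemma \ref{lw1} (with Lemma \ref{lw2} guaranteeing finiteness of the left-hand side), and weighted bounds for the Hardy--Littlewood maximal function, following \cite[Cor.~4.2]{LOPTT}; the local piece is likewise handled by Kolmogorov's inequality plus Theorem \ref{thm:endpoint}. Two points in your sketch deserve correction, one cosmetic and one substantive.

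First, the bound you claim, $M^{\sharp}_{\delta}(T\vec f\,)(x)\le C\,\mathcal{M}(\vec f\,)(x)$, is stronger than what Assumption (H2) yields: the $\phi$-term in \eqref{smoothnessforkernel1} only forces one variable $y_k$ to lie within distance $t^{1/s}\sim\ell(Q)$ of $x$, and after the annular decomposition the resulting terms carry averages of the various $f_j$ over cubes of different sizes; the geometric decay in $k$ is exactly what is consumed if one tries to put all averages on a common cube. What comes out is $C\prod_{j=1}^m Mf_j(x)$, which is precisely the statement of Lemma \ref{lw3}. This costs you nothing here, since your weighted deduction only uses $\omega\in A_{p_i}$ for each $i$ together with H\"older (i.e.\ the product bound), but the $\mathcal{M}$-form should not be asserted under (H2) alone. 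Second, and more substantively, your comparison constant $c_Q(\alpha)=T(\widetilde f_1^{\alpha_1},\dots,\widetilde f_m^{\alpha_m})(c_Q)$ with $A_{t_Q}$ applied to the slots carrying $f_i^{\infty}$ invokes the wrong hypothesis: smoothing a function slot is governed by Assumption (H1), i.e.\ by the kernels $K^{(i)}_{t,v}$ and the estimate \eqref{H2}, whereas \eqref{sizeforkernel2} and \eqref{smoothnessforkernel1}, which you plan to use, concern $K^{(0)}_{t,v}$, the kernel smoothed in the target variable $x$. As written, the key kernel-difference step would not go through. The repair is either to genuinely use (H1) with \eqref{H2} (as in the proof of Theorem \ref{thm:endpoint}), or to do what the paper does in Lemma \ref{lw3}: leave the functions untouched, subtract a constant built from $K_v(x,\cdot)$, and estimate $\bigl(\int_0^{\infty}|K_v(z,\vec y)-K_v(x,\vec y)|^2\,\tfrac{dv}{v}\bigr)^{1/2}$ by inserting $K^{(0)}_{t,v}(z,\vec y)$ and applying \eqref{smoothnessforkernel1}. (Be aware that the remaining comparison of $K^{(0)}_{t,v}(z,\cdot)$ with $K_v(x,\cdot)$ really calls for an (H3)-type regularity in the first variable; the paper's ``$I_2$ can be estimated in a similar way'' glosses this point, so make the use of \eqref{smoothnessforkernel2} explicit if you write the argument out.)
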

The proof of Theorem \ref{thm:22} is similar to \cite[Cor. 4.2]{LOPTT}, which is based on the following lemmas.

\begin{lemma}(\cite{fefferman})\label{lw1}
Let $0<p,\delta<\infty$ and $\omega$ be any Mackenhoupt  $A_\infty$ weight. Then there exists a  constant C independent of $f$ such that the inequality
\begin{equation}\label{21}
\int_{\mathbb{R}^n}(M_\delta f(x))^p\omega(x)dx\leq C
\int_{\mathbb{R}^n}(M^\sharp_\delta f(x))^p\omega(x)dx
\end{equation}
holds for any function such that the left-hand side is finite.

Moreover, if $\varphi: (0,\infty)\rightarrow (0,\infty)$ be a doubling, that is,
 $\varphi(2a)\leq C \varphi(a)$ for $a>0$. Then, there exists a constant C depending upon the $A_\infty$ condition of $\omega$ and doubling condition of $\varphi$ such that
\begin{equation}\label{20}
\sup_{\lambda>0}\varphi(\lambda)\omega(\{y\in \mathbb{R}^n: M_\delta
f(y)>\lambda\})\leq C \sup_{\lambda>0}\varphi(\lambda)\omega(\{y\in
\mathbb{R}^n: M^\sharp_\delta f(y)>\lambda\})
\end{equation}
for any function such that the left-hand side is finite.

\end{lemma}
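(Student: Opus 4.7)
The plan is to follow the classical good-$\lambda$ route of Fefferman--Stein, adapted to $A_\infty$ weights via the self-improving $A_\infty$ property. I would first reduce to the case $\delta=1$: writing $g=|f|^\delta$, one has $M_\delta f=(Mg)^{1/\delta}$ and $M^\sharp_\delta f=(M^\sharp g)^{1/\delta}$, so both inequalities become the standard Fefferman--Stein inequalities for $Mg$ and $M^\sharp g$ with exponent $p/\delta$ (resp.\ with $\varphi$ replaced by $\tilde\varphi(\lambda)=\varphi(\lambda^{1/\delta})$, which remains doubling). Hence it suffices to prove
\[
\int_{\mathbb{R}^n}(Mg)^q\,\omega\,dx\le C\int_{\mathbb{R}^n}(M^\sharp g)^q\,\omega\,dx,\qquad q=p/\delta>0,
\]
together with the corresponding weak-type statement, under the standing assumption that the left-hand side is finite.

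The heart of the argument is the good-$\lambda$ inequality
\[
\omega\bigl(\{Mg>2\lambda,\;M^\sharp g\le\gamma\lambda\}\bigr)\le C\gamma^{\theta}\,\omega\bigl(\{Mg>\lambda\}\bigr),\qquad \lambda>0,
\]
for all sufficiently small $\gamma>0$ and some $\theta>0$ depending only on $\omega$. To obtain it I would perform a Whitney/Calder\'on--Zygmund decomposition of the open set $\{Mg>\lambda\}=\bigcup_j Q_j$ into disjoint maximal dyadic cubes, so that a fixed dilate of each $Q_j$ meets $\{Mg\le\lambda\}$. On each $Q_j$ the Lebesgue estimate
\[
\bigl|\{x\in Q_j:Mg(x)>2\lambda,\;M^\sharp g(x)\le\gamma\lambda\}\bigr|\le C\gamma|Q_j|
\]
is obtained by splitting $g=g\chi_{2Q_j}+g\chi_{(2Q_j)^c}$: the contribution of the far part is pointwise $\lesssim\lambda$ on $Q_j$ by the Whitney property, while the near part is handled by the weak-$(1,1)$ bound of $M$ together with an averaging against $g_{Q_j^*}$, picking up the factor $M^\sharp g\le\gamma\lambda$. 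The passage from Lebesgue to $\omega$-measure is then forced by the self-improving $A_\infty$ property: there exist $C,\theta>0$ with $\omega(E)\le C(|E|/|Q|)^\theta\omega(Q)$ for every cube $Q$ and every measurable $E\subset Q$. Applying this on each $Q_j$ and summing yields the good-$\lambda$ inequality, after absorbing the constant into $\gamma^\theta$.

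From the good-$\lambda$ inequality the strong-type bound \eqref{21} follows by the standard layer-cake trick: write
\[
\int(Mg)^q\omega\,dx=q\,2^q\int_0^\infty\lambda^{q-1}\omega(\{Mg>2\lambda\})\,d\lambda,
\]
split $\omega(\{Mg>2\lambda\})$ using the good-$\lambda$ inequality plus $\omega(\{M^\sharp g>\gamma\lambda\})$, and choose $\gamma$ so small that $C\gamma^\theta\cdot 2^q<\tfrac12$; the finiteness hypothesis on the left-hand side is what allows the resulting term to be absorbed. The weak-type inequality \eqref{20} is obtained by the identical procedure but integrating against $d\varphi(\lambda)$ (or taking suprema) and using the doubling of $\varphi$ to compare $\varphi(2\lambda)$ with $\varphi(\lambda)$ and $\varphi(\lambda)$ with $\varphi(\gamma\lambda)$, again absorbing using the standing finiteness assumption.

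The main obstacle, and the only nonroutine point, is the Lebesgue-measure good-$\lambda$ estimate on each Whitney cube; once that is in place, the $A_\infty$ reverse-H\"older/self-improving inequality and the layer-cake argument are mechanical. Everything else (the reduction to $\delta=1$, the doubling bookkeeping for $\varphi$, and the absorption step) is formal.
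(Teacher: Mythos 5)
The paper does not actually prove this lemma---it simply cites the classical Fefferman--Stein sharp maximal function inequality---and your good-$\lambda$ argument is precisely the standard proof of that cited result (reduction to $\delta=1$, Whitney decomposition of $\{Mg>\lambda\}$, local good-$\lambda$ estimate, passage to $\omega$-measure via the $A_\infty$ self-improving property, then layer-cake and absorption), so the approach matches and is correct. The only detail worth tightening is the threshold constant: the far part $M(g\chi_{(2Q_j)^c})$ is bounded on $Q_j$ by $C_n\lambda$ with $C_n$ possibly exceeding $1$, so one should run the good-$\lambda$ inequality with $\{Mg>A\lambda\}$ for a suitable dimensional constant $A$ in place of $2\lambda$, which changes nothing in the subsequent layer-cake, doubling, and absorption steps.
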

Similar as \cite[Lemma 4.2]{XY}, we can easily get
\begin{lemma}\label{lw2}
     Let $T$ be a multilinear operator in $m-GSFO(A,s,\eta,\varepsilon)$. Suppose supp $f_i \subset B(0,R)$,  then there is a constant $C<\infty$  such that for $|x|>2R,$

$$  T(\vec{f})(x)\leq C  \prod_{j=1}^m M f_j(x).$$
\end{lemma}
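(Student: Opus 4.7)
The plan is to reduce the square-function pointwise estimate to the already familiar scalar estimate for multilinear singular integrals with size bound (\ref{size1}), and then exploit the geometry of the support condition.

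First, I would apply Minkowski's integral inequality to move the $L^{2}(\tfrac{dv}{v})$--norm inside the spatial integration, obtaining
\[
T(\vec f)(x) \le \int_{(\R^{n})^{m}} \Bigl(\int_{0}^{\infty}|K_{v}(x,y_{1},\dots,y_{m})|^{2}\tfrac{dv}{v}\Bigr)^{1/2}\prod_{j=1}^{m}|f_{j}(y_{j})|\,d\vec y.
\]
The size condition (\ref{size1}) for kernels in $m$-$GSFK(A,s,\eta,\varepsilon)$ then gives
\[
T(\vec f)(x) \le A \int_{(\R^{n})^{m}}\frac{\prod_{j=1}^{m}|f_{j}(y_{j})|}{\bigl(\sum_{j=1}^{m}|x-y_{j}|\bigr)^{mn}}\,d\vec y.
\]

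Next I would use the hypotheses $\supp f_{j}\subset B(0,R)$ and $|x|>2R$. For any $y_{j}$ in the support of $f_{j}$ we have $|y_{j}|<R<|x|/2$, hence $|x-y_{j}|\ge |x|-|y_{j}|>|x|/2$, and consequently
\[
\Bigl(\sum_{j=1}^{m}|x-y_{j}|\Bigr)^{mn}\ge (m|x|/2)^{mn}.
\]
This decouples the integral: the integrand factors, and we obtain
\[
T(\vec f)(x) \le \frac{A\,2^{mn}}{m^{mn}\,|x|^{mn}}\prod_{j=1}^{m}\int_{B(0,R)}|f_{j}(y_{j})|\,dy_{j}.
\]

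Finally I would relate each factor to the Hardy--Littlewood maximal function. Since $B(0,R)\subset B(x,\tfrac{3}{2}|x|)$ (because $|y_{j}-x|\le |y_{j}|+|x|<\tfrac{3}{2}|x|$), the mean value on the larger ball controls the integral on $B(0,R)$:
\[
\int_{B(0,R)}|f_{j}(y_{j})|\,dy_{j}\le \bigl|B(x,\tfrac{3}{2}|x|)\bigr|\cdot \frac{1}{|B(x,\tfrac{3}{2}|x|)|}\int_{B(x,\tfrac{3}{2}|x|)}|f_{j}|\,dy_{j}\le C\,|x|^{n}\,Mf_{j}(x).
\]
Taking the product over $j$ cancels the factor $|x|^{mn}$, leaving the desired bound $T(\vec f)(x)\le C\prod_{j=1}^{m}Mf_{j}(x)$.

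There is no serious obstacle in this argument; the only subtle point is remembering to apply Minkowski's inequality at the very beginning so that only the size estimate (\ref{size1}) (and not the more delicate smoothness conditions (\ref{H1})--(\ref{H2})) is needed. Everything else is a routine geometric computation using $|x|>2R$.
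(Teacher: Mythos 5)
Your argument is correct and is essentially the proof the paper has in mind: the paper simply refers to \cite[Lemma 4.2]{XY}, whose proof is exactly this routine combination of Minkowski's inequality, the size condition \eqref{size1} (which is part of the definition of $m$-$GSFK(A,s,\eta,\varepsilon)$), and the geometry of $\supp f_j\subset B(0,R)$ with $|x|>2R$. Your computation of the lower bound $|x-y_j|>|x|/2$ and the comparison $\int_{B(0,R)}|f_j|\le C|x|^n Mf_j(x)$ are exactly the standard steps, so no further comment is needed.
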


\begin{lemma}\label{lw3} Let $ 0<\delta<1/m$ and let $T$ be a multilinear operator in $m-GSFO(A,s,\eta,\varepsilon)$ with a kernel  satisfies Assumption (H2). Then there exists a constant $C$ such that for any bounded and compact supported function $f_i,i=1,\dots,m.$
$$
M_{\delta}^{\#}T\vec{f}(x)\leq C \prod_{j=1}^m M f_j(x).
$$
\end{lemma}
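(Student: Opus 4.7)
The plan is a Fefferman--Stein style sharp maximal estimate. Fix $x\in\Rn$ and a cube $Q\ni x$ with center $x_{0}$ and side length $r$; set $Q^{*}=5\sqrt{n}Q$ and decompose $f_{i}=f_{i}^{0}+f_{i}^{\infty}$ with $f_{i}^{0}=f_{i}\chi_{Q^{*}}$. Expanding $\prod_{i=1}^{m}(f_{i}^{0}+f_{i}^{\infty})$ inside the $L^{2}(dv/v)$--norm and applying the triangle inequality,
$$T(\vec{f})(z)\le\sum_{\vec{\alpha}\in\{0,\infty\}^{m}}T(\vec{f}^{\vec{\alpha}})(z),\qquad \vec{f}^{\vec{\alpha}}=(f_{1}^{\alpha_{1}},\dots,f_{m}^{\alpha_{m}}).$$
Let $t=r^{s}$ and set $c:=T^{(t)}(\vec{f}^{(\infty,\dots,\infty)})(x_{0})$, i.e., the value at the center of the operator built from the smoothed kernel $K_{t,v}^{(0)}$ of Assumption (H2) applied to the fully non--local tuple. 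Since $0<\delta<1/m<1$, the inequality $||a|^{\delta}-|b|^{\delta}|\le|a-b|^{\delta}$ together with the subadditivity of $t\mapsto t^{\delta}$ reduce the task to controlling the averages of $T(\vec{f}^{\vec\alpha})^{\delta}$ (for mixed and local tuples) and of $(T(\vec{f}^{\infty})-c)^{\delta}$ (for the fully non--local tuple) over $Q$ by $C(\prod Mf_{i}(x))^{\delta}$.

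The all--local tuple is handled by Kolmogorov's inequality combined with the weak--type $L^{1}\times\cdots\times L^{1}\to L^{1/m,\infty}$ bound from Theorem \ref{thm:endpoint} (applicable since $T\in m$-$GSFO$); this yields $(|Q|^{-1}\int_{Q}T(\vec{f}^{0})^{\delta}\,dz)^{1/\delta}\le C\prod_{i}Mf_{i}(x)$. For a mixed tuple with $\ell=\#\{i:\alpha_{i}=\infty\}$ in the range $1\le\ell\le m-1$, Minkowski's inequality in $L^{2}(dv/v)$ together with the size bound \eqref{size1} yields the pointwise estimate
$$T(\vec{f}^{\vec\alpha})(z)\le A\int_{(\Rn)^{m}}\frac{\prod_{i=1}^{m}|f_{i}^{\alpha_{i}}(y_{i})|}{\bigl(\sum_{k=1}^{m}|z-y_{k}|\bigr)^{mn}}\,d\vec{y}.$$
Using AM--GM in the form $(\sum_{k=1}^{m}|z-y_{k}|)^{mn}\ge\ell^{mn}\prod_{\alpha_{i}=\infty}|z-y_{i}|^{mn/\ell}$, integrating the $m-\ell$ localized factors over $Q^{*}$ produces $r^{n}Mf_{i}(x)$ each, and an annular decomposition on the $\ell$ non--local factors (which converges because the exponent $n-mn/\ell$ is strictly negative when $\ell<m$) produces $r^{n-mn/\ell}Mf_{i}(x)$ each; these powers of $r$ telescope and deliver $T(\vec{f}^{\vec\alpha})(z)\le C\prod Mf_{i}(x)$ uniformly on $Q$.

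The genuinely delicate case is the fully non--local tuple $\vec\alpha=(\infty,\dots,\infty)$, where the above annular exponent degenerates to $n-mn/m=0$ and the direct pointwise approach fails. Here Minkowski in $L^{2}(dv/v)$ gives
$$|T(\vec{f}^{\infty})(z)-c|\le\Bigl(\int_{0}^{\infty}\Bigl|\int[K_{v}(z,\vec{y})-K_{t,v}^{(0)}(x_{0},\vec{y})]\prod_{i=1}^{m}f_{i}^{\infty}(y_{i})\,d\vec{y}\Bigr|^{2}\frac{dv}{v}\Bigr)^{1/2},$$
and inserting $\pm K_{t,v}^{(0)}(z,\vec{y})$ splits the right--hand side into two pieces. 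The piece involving $K_{v}(z,\vec{y})-K_{t,v}^{(0)}(z,\vec{y})$ is controlled by \eqref{smoothnessforkernel1}: the $\phi$--cutoff restricts the first term to thin annular shells and the second term supplies the quantitative $\varepsilon$--decay $t^{\varepsilon/s}/(\sum|z-y_{k}|)^{mn+\varepsilon}$. The piece involving $K_{t,v}^{(0)}(z,\vec{y})-K_{t,v}^{(0)}(x_{0},\vec{y})$ is controlled by \eqref{smoothnessforkernel2}. Integrating each against $\prod|f_{i}^{\infty}|$ and summing over the annuli $\{2^{k}Q^{*}\setminus 2^{k-1}Q^{*}\}_{k\ge 1}$ outside $Q^{*}$, the extra $\varepsilon$--power now makes the sum converge and yields $C\prod Mf_{i}(x)$. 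The principal obstacle is this second piece: the comparison $K_{t,v}^{(0)}(z,\cdot)-K_{t,v}^{(0)}(x_{0},\cdot)$ strictly requires first--variable regularity of $K_{t,v}^{(0)}$, which is formally the content of Assumption (H3) rather than (H2); to proceed strictly under (H2), one must exploit the representation $K_{t,v}^{(0)}(x,\vec{y})=\int K_{v}(w,\vec{y})a_{t}(x,w)\,dw$ to transfer the $x$--dependence onto the approximation--to--the--identity kernel and then use the size and decay of $a_{t}$ supplied by \eqref{sizecondition1}--\eqref{smallcondition2}. This transfer of Hölder regularity through $a_{t}$ is the most technical piece of book--keeping in the argument.
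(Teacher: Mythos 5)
Your overall architecture is the same as the paper's: decompose $f_i=f_i^{0}+f_i^{\infty}$ relative to $Q^{*}$, control the all--local tuple by Kolmogorov's inequality together with the weak endpoint bound of Theorem \ref{thm:endpoint}, and treat the far parts through the smoothed kernel $K_{t,v}^{(0)}$ of Assumption (H2). Your handling of the mixed tuples ($1\le\ell\le m-1$) by the size condition \eqref{size1} alone, with the AM--GM/annulus count and the exponent $n-mn/\ell<0$, is correct and in fact cleaner than the paper, which pushes every $\vec\alpha\neq\vec 0$ through the kernel--difference terms $I_1$, $I_2$.

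The problem is the step you yourself flag, and it is not merely ``bookkeeping'': for the all--$\infty$ tuple you must estimate $\bigl(\int_0^\infty|K_{t,v}^{(0)}(z,\vec y)-K_{t,v}^{(0)}(x_0,\vec y)|^2\frac{dv}{v}\bigr)^{1/2}$, and the repair you propose under (H2) alone cannot work. Conditions \eqref{sizecondition1}--\eqref{smallcondition2} give only pointwise size and decay of $a_t(x,\cdot)$, with no continuity in the first variable; so from the representation \eqref{sizeforkernel} the best you can extract is $|a_t(z,w)-a_t(x_0,w)|\le h_t(z,w)+h_t(x_0,w)$, and after Minkowski and \eqref{size1} this reproduces only the size bound $A(\sum_k|z-y_k|)^{-mn}$ with no gain $(t^{1/s}/\sum_k|z-y_k|)^{\varepsilon}$. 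Since in this tuple all $m$ variables lie in the far region, this is exactly the degenerate exponent $\ell=m$ that you identified as fatal for a size--only argument, and the resulting sum over the annuli $2^{k+1}Q^{*}\setminus 2^{k}Q^{*}$ diverges logarithmically. What does close the argument is precisely \eqref{smoothnessforkernel2}, i.e.\ Assumption (H3) (applicable here after adjusting $t$ so that $2|z-x_0|\le t^{1/s}\le\tfrac12\min_j|z-y_j|$), which supplies the missing $\varepsilon$--decay; but (H3) is not among the hypotheses of Lemma \ref{lw3}. For what it is worth, the paper's own proof is silent at the very same spot: after using \eqref{smoothnessforkernel1} on its term $I_1$, it disposes of $I_2$ (which compares $K_{t,v}^{(0)}(z,\cdot)$ with $K_v(x,\cdot)$ and contains the same two--base--point difficulty in the all--far configuration) with the phrase ``can be estimated in a similar way''; in substance that step also uses first--variable regularity of the smoothed kernel. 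So your diagnosis of what is missing is right, but the proposed transfer through $a_t$ does not supply it, and as written your proof of the key case is incomplete.
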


\begin{proof} Fix a point $x\in \mathbb{R}^n$ and a cube $Q$ containing $x$. For $0<\delta<1/ m,$ we need to show there exists a constant $c_Q$ such that
$$\left( \frac{1}{|Q|}\int_{Q}\left| T(\vec{f})(z)-c_Q\right|^\delta dz\right)^{1/\delta}\leq C \prod_{j=1}^m M f_j(x).$$

Let $c_{Q,t}=\sum_{\vec{\alpha},\vec{\alpha}\neq \vec{0}} \int_{\mathbb{R}^{nm}}K_t(x,y_1,\cdots,y_m)\prod_{j=1}^m f_j(y_j)d\vec{y}$, where $\vec{\alpha}=(\alpha_1,\cdots,\alpha_m)$ with $\alpha_i=0$ or $\infty$ and let $c_Q=\left( \int_0^\infty |c_{Q,t}|^2\frac{dt}{t} \right)^{1/2}.$
\begin{eqnarray*}
 &&\left( \frac{1}{|Q|}\int_{Q}\left| T(\vec{f})(z)-c_Q\right|^\delta dz\right)^{1/\delta}\\
&\leq  &C \left( \frac{1}{|Q|}\int_{Q}\left(\int_0^\infty\left| \int_{\mathbb{R}^{nm}}K_t(z,\vec{y})\prod_{j=1}^m f_j^0(y_j)d\vec{y}\right|^2\frac{dt}{t}\right)^{\delta/2} dz\right)^{1/\delta}\\
&\quad +  &C \sum_{\vec{\alpha},\vec{\alpha}\neq \vec{0}}\left( \frac{1}{|Q|}\int_{Q}\left(\int_0^\infty\left| \int_{\mathbb{R}^{nm}}\left(K_t(z,\vec{y})-K_t(x,\vec{y})\right)\prod_{j=1}^m |f_j^{\alpha_j}(y_j)|d\vec{y}\right|^2\frac{dt}{t}\right)^{\delta/2} dz\right)^{1/\delta}\\
&:=& I_{\vec{0}} + C\sum_{\vec{\alpha}\neq \vec{0}} I_{\vec{\alpha}}.
\end{eqnarray*}

By using Kolmogorov' inequality and Theorem \ref{thm:endpoint}, we get $I_{\vec{0}}\leq C \prod_{j=1}^m M f_j(x).$
 \par
 Now, we turn our attention to the integral
  $$\int_{\mathbb{R}^{nm}}\left(\int_0^\infty\left| K_v(z,\vec{y})-K_v(x,\vec{y})\right|^2\frac{dv}{v}\right)^{1/ 2}\prod_{j=1}^m| f_j^{\alpha_j}(y_j)|d\vec{y},$$

which can be decomposed as
 \begin{eqnarray*}
  &&\int_{\mathbb{R}^{nm}}\left(\int_0^\infty\left| K_v(z,\vec{y})-K_v(x,\vec{y})\right|^2\frac{dv}{v}\right)^{1/ 2}\prod_{j=1}^m| f_j^{\alpha_j}(y_j)|d\vec{y}\\
   &\leq & \int_{\mathbb{R}^{nm}}\left(\int_0^\infty\left| K_v(z,\vec{y})-K_{t,v}^{(0)}(z,\vec{y})\right|^2\frac{dv}{v}\right)^{1/ 2}\prod_{j=1}^m| f_j^{\alpha_j}(y_j)|d\vec{y}\\
   &\quad + & \int_{\mathbb{R}^{nm}}\left(\int_0^\infty\left| K_{t,v}^{(0)}(z,\vec{y})-K_v(x,\vec{y})\right|^2\frac{dv}{v}\right)^{1/ 2}\prod_{j=1}^m| f_j^{\alpha_j}(y_j)|d\vec{y}\\
   &=:& I_{1}+I_{2}.
\end{eqnarray*}
By \eqref{smoothnessforkernel1} in Assumption (H2) we obtain that
\begin{eqnarray*}
   I_{1}&\leq & C \prod_{j\in \{j_1,\dots,j_l\}}\int_{Q^*}|f_j(y_j)|dy_j\bigg( \int_{(\mathbb{R}^n\setminus Q^*)^{m-l}}\frac{t^{\varepsilon/ s}\prod_{j\notin \{j_1,\dots,j_l\}}|f_j(y_j)|dy_j}{(\sum_{j\notin \{j_1,\dots,j_l\}}|z-y_j|)^{mn+\varepsilon}}\\
   &\quad+& \int_{(\mathbb{R}^n\setminus Q^*)^{m-l}}\frac{ \prod_{j\notin \{j_1,\dots,j_l\}}|f_j(y_j)|dy_j}{(\sum_{j\notin \{j_1,\dots,j_l\}}|z-y_j|)^{mn }}\bigg)\\
   &\leq & C \prod_{j\in \{j_1,\dots,j_l\}}\int_{Q^*}|f_j(y_j)|dy_j \bigg( \sum_{k=1}^\infty \frac{|Q^*|^{\varepsilon / n}}{(2^k|Q^*|^{1/ n})^{mn+\varepsilon}} \int_{(2^{k+1} Q^*)^{m-l}}\prod_{j\notin \{j_1,\dots,j_l\}}|f_j(y_j)|dy_j\\
     &\quad+& \sum_{k=1}^\infty \frac{1}{(2^k|Q^*|^{1/ n})^{mn}} \int_{(2^{k+1} Q^*\setminus 2^{k} Q^*)^{m-l}}\prod_{j\notin \{j_1,\dots,j_l\}}|f_j(y_j)|dy_j\bigg)\\
     &\leq & C \prod_{j=1}^m M f_j(x).
\end{eqnarray*}

$I_{2}$ can be estimated in a similar way, which is also dominated by $C\prod_{j=1}^m M f_j(x).$
Then, by Minkowski's inequality and above estimates, we get
\begin{eqnarray*}
  I_{\vec{\alpha}}&\leq & C\left( \frac{1}{|Q|}\int_{Q}\left( \int_{\mathbb{R}^{nm}}\left(\int_0^\infty\left| K_t(z,\vec{y})-K_t(x,\vec{y})\right|^2\frac{dt}{t}\right)^{1/ 2}\prod_{j=1}^m| f_j^{\alpha_j}(y_j)|d\vec{y}\right)^{\delta} dz\right)^{1/\delta}\\
  &\leq & C \prod_{j=1}^m M f_j(x).
\end{eqnarray*}
This concludes the proof.
\end{proof}

\section{Weighted estimates for $T^*$}
In this section, we will study the multilinear maximal square function $T^*$

\begin{equation*}
\begin{split}
T^*(\vec{f})(x)=\left( \int_{0}^\infty \sup_{\delta>0}\left|\int_{\sum_{i=1}^m|x-y_i|^2>\delta^2}K_{v}(x,y_1,\dots,y_m) \prod_{j=1}^mf_{j}(y_j)d\vec{y}\right|^2\frac{dv}{v}\right)^{\frac 12}.
\end{split}
\end{equation*}

It should be pointed out that there is another kind of multilinear maximal square function $T^{**}$ given by

\begin{equation*}
\begin{split}
T^{**}(\vec{f})(x)=\sup_{\delta>0}\left( \int_{0}^\infty \left|\int_{\sum_{i=1}^m|x-y_i|^2>\delta^2}K_{v}(x,y_1,\dots,y_m) \prod_{j=1}^mf_{j}(y_j)d\vec{y}\right|^2\frac{dv}{v}\right)^{\frac 12}.
\end{split}
\end{equation*}

It is obvious that $T^{**}(\vec{f})(x)\leq T^*(\vec{f})(x)$. Thus, it is more meaningful to give some estimates for operator $T^*.$
In this following, we establish some multiple-weighted normal inequalities and weak-type estimates for $T^*$.


\begin{theorem}\label{lemma2}  Let $\frac{1}{p}=\frac{1}{p_1}+\cdots +\frac{1}{p_m}$ and $\omega\in A_p$ with $p\geq 1.$ Let $T$ be a multilinear operator in $m-GSFO(A,s,\eta,\varepsilon)$ with a kernel satisfies Assumptions (H2) and (H3). Then the following inequality holds:
\par
(i) If $1<p_1, \cdots, p_m<\infty,$ then
$$||T^*\vec{f}||_{L^p(\omega)}\leq C\prod
\limits_{i=1}^{m}||f_i||_{L^{p_i}(\omega)}.$$
\par
(ii) If  $1\leq p_1, \cdots, p_m<\infty, $ then
$$||T^*\vec{f}||_{L^{p,\infty}(\omega)}\leq C\prod
\limits_{i=1}^{m}||f_i||_{L^{p_i}(\omega)}.$$
\end{theorem}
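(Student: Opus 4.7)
The plan is to reduce Theorem \ref{lemma2} to Theorem \ref{thm:22} by proving a Cotlar-type pointwise inequality that controls $T^*$ by $T$ itself together with the multisublinear Hardy--Littlewood maximal operator
\[
\Mm(\vec f)(x):=\sup_{Q\ni x}\prod_{j=1}^m \frac{1}{|Q|}\int_Q |f_j(y)|\,dy.
\]
Concretely, I aim to establish that for every $0<\tau<1/m$ and every $x\in\Rn$,
\[
T^*(\vec f)(x)\le C\,\bigl[M_\tau\bigl(T(\vec f)\bigr)(x)\,+\,\Mm(\vec f)(x)\bigr],
\]
where $M_\tau g:=M(|g|^\tau)^{1/\tau}$ and $M$ is the ordinary Hardy--Littlewood maximal operator. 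This is the natural analogue of the classical Cotlar inequality for the smooth multilinear Calder\'on--Zygmund setting, adapted to the non-smooth kernels considered here via the approximation operators $A_t$.

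Granted this pointwise bound, part (i) follows by combining the Fefferman--Stein inequality of Lemma \ref{lw1} (with a suitable $\tau$ such that $\omega\in A_{p/\tau}$, which is available because $\omega\in A_p\subset A_{p/\tau}$), the strong-type weighted estimate of Theorem \ref{thm:22}(1) for $T$, and the known $L^{p_1}(\omega)\times\cdots\times L^{p_m}(\omega)\to L^p(\omega)$ boundedness of $\Mm$ for $\omega\in A_p$. Part (ii), when some $p_i=1$, follows by the same scheme, using instead the weak-type estimate of Lemma \ref{lw1}, the weak-type bound of Theorem \ref{thm:22}(2), and the weak endpoint $L^{p_1}(\omega)\times\cdots\times L^{p_m}(\omega)\to L^{p,\infty}(\omega)$ of $\Mm$.

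To prove the Cotlar-type bound, fix a truncation radius $r>0$, set $t=r^s$ with $s$ the constant from \eqref{sizecondition1}, and split each $f_j=f_j^{0}+f_j^{\infty}$ according to whether $|y_j-x|\le 2r$ or $|y_j-x|>2r$. This decomposes the truncated integral
$\int_{\sum_i|x-y_i|^2>r^2}K_v(x,\vec y)\prod_j f_j(y_j)\,d\vec y$
into a principal piece involving $\prod_j f_j^\infty$ plus $2^m-1$ error pieces in which at least one factor is local. Passing to $L^2(dv/v)$ norms by Minkowski, the error pieces are bounded using the size estimate \eqref{size1} (together with dyadic annular decomposition in the remaining variables) and yield a quantity $\le C\,\Mm(\vec f)(x)$, in the spirit of the estimates for $I_1$ and $I_2$ in the proof of Lemma \ref{lw3}. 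For the principal piece, I would insert the approximating kernel $K^{(0)}_{t,v}$ and invoke \eqref{smoothnessforkernel1} of Assumption (H2) together with the regularity \eqref{smoothnessforkernel2} of Assumption (H3) to replace $K_v(x,\vec y)$ by the average $\frac{1}{|B(x,r)|}\int_{B(x,r)}K_v(z,\vec y)\,dz$ up to a controllable remainder. After taking the $L^2(dv/v)$ norm and the supremum over $r$, Kolmogorov's inequality (applicable since $\tau<1/m$) identifies the resulting averaged expression with $M_\tau(T\vec f)(x)$.

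The main technical obstacle will be keeping all constants uniform in the truncation parameter $r$: the approximating kernel $K^{(0)}_{t,v}$ depends on $t=r^s$, so one must check that the errors controlled by (H2) and (H3) produce geometric decay along dyadic scales $\{2^k r\}_{k\ge 0}$ and can be summed into a single bound by $\Mm(\vec f)(x)$. This is precisely the step where Assumption (H3), which was not needed in Theorem \ref{thm:22}, becomes essential, as the H\"older regularity in the first variable of $K^{(0)}_{t,v}$ is what enables replacing $K^{(0)}_{t,v}(x,\vec y)$ by $K^{(0)}_{t,v}(z,\vec y)$ with $z\in B(x,r)$ in the averaging step. Once the Cotlar-type inequality is established with a constant independent of $r$, the weighted strong and weak estimates in (i)--(ii) follow by the machinery described above.
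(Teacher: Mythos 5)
Your overall strategy is the same as the paper's: a Cotlar-type pointwise inequality $T^*(\vec f)(x)\le C\bigl(M_\tau(T\vec f)(x)+\prod_{j=1}^m Mf_j(x)\bigr)$ (Lemma \ref{31} there; since $\Mm(\vec f)\le\prod_j Mf_j$ the two right-hand sides play the same role), followed by Theorem \ref{thm:22}, the weighted bounds for $M$ (Lemmas \ref{32}, \ref{33}) and H\"older's inequality, in the weak case for Lorentz spaces. One small misattribution in the reduction: to bound $\|M_\tau(T\vec f)\|_{L^p(\omega)}$ you do not need the Fefferman--Stein inequality of Lemma \ref{lw1} (no sharp maximal function enters); you only need the $A_{p/\tau}$-weighted (weak) boundedness of $M$ applied to $|T\vec f|^\tau$, i.e. Lemma \ref{32} or \ref{33}, which is what the paper uses.

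The genuine gap is in your sketch of the Cotlar inequality, at the principal term. After replacing $K_v(x,\vec y)$ by its average over $B(x,r)$ and applying Minkowski in $L^2(dv/v)$, what you obtain is a \emph{first-power} average $\frac{1}{|B(x,r)|}\int_{B(x,r)}T(f_1^{\infty},\dots,f_m^{\infty})(z)\,dz$ of $T$ acting on the \emph{far} parts, not anything involving $T(\vec f)$. To reach $M_\tau(T\vec f)(x)+\prod_j Mf_j(x)$ you must reinstate, at the point $z$, the pieces containing local factors: the mixed terms are controlled by \eqref{size1} and dyadic decomposition, but the fully local piece $T(f_1^{0},\dots,f_m^{0})(z)$ admits no pointwise bound (the truncation $\sum_i|x-y_i|^2>r^2$ that protected you at $x$ is absent at $z$). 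This is precisely where the endpoint estimate $\|T\|_{L^1\times\cdots\times L^1\to L^{1/m,\infty}}$ of Theorem \ref{thm:endpoint} is indispensable, combined with Kolmogorov's inequality; and since Kolmogorov only controls $\tau$-th power averages with $\tau<1/m$, the whole pointwise comparison must be raised to the power $\tau$ \emph{before} averaging in $z$ (this is the paper's step ``raising to the power $\eta$, integrating over $z\in B(x,\delta/2)$'' in the proof of Lemma \ref{31}, with the local term handled as in the cited maximal-operator argument). Your statement that ``Kolmogorov's inequality identifies the resulting averaged expression with $M_\tau(T\vec f)(x)$'' conflates these two distinct steps and omits the use of Theorem \ref{thm:endpoint} altogether, so as written the argument does not close. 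A secondary caution: with the paper's definition of $T^*$ the supremum over the truncation radius sits \emph{inside} the $dv/v$ integral, so fixing $r$, taking the $L^2(dv/v)$ norm and then $\sup_r$ only treats $T^{**}$; if you follow this route you should either work with $\sup_r$ inside the square function throughout or note explicitly how the uniform-in-$r$ majorants allow the interchange.
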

To prove Theorem \ref{lemma2} we need some lemmas.

\begin{lemma}\label{31}Let $T$ be a multilinear operator in $m-GSFO(A,s,\eta,\varepsilon)$ with a kernel satisfies Assumptions (H2) and (H3).  For any $\eta>0$, there is a constant $C<\infty$ depending on $\eta$ such that for all $\vec{f}$ in any product of $L^{q_j}(\mathbb{R}^n)$ spaces, with $1\leq q_j<\infty,$ then the following inequality hold for all $x\in \mathbb{\mathbb{R}}^n$

$$T^*(\vec{f})(x)\leq C \left( M_\eta(T(\vec{f}))(x)+\prod_{j=1}^m M(f_j)(x)\right).$$
\end{lemma}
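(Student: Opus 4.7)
This is a Cotlar-type inequality for the maximal truncated multilinear square function $T^*$, extending the standard multilinear Cotlar estimate to the non-smooth kernel framework governed by (H2)--(H3). The plan is (i) to produce, for each $\delta>0$ and each $z\in B(x,\delta/2)$, a comparison between the truncated integral $I_{\delta,v}(x)$ and $T_v\vec{f}(z)$, with the error bounded in $L^2(dv/v)$ by a product of Hardy--Littlewood maximal functions; and (ii) to select a good $z\in B(x,\delta/2)$ via Kolmogorov's inequality so that $T(\vec{f})(z)\le 2M_\eta(T(\vec{f}))(x)$, promoting the $L^1$-averaged comparison to the $L^\eta$-averaged one.

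\textbf{Setup and local term.} Fix $x\in\R^n$ and $\delta>0$; set $t=\delta^s$. Decompose $f_j=f_j^0+f_j^\infty$ with $f_j^0=f_j\chi_{B(x,\delta)}$ and expand $\prod_jf_j=\sum_{\vec{\alpha}\in\{0,\infty\}^m}\prod_jf_j^{\alpha_j}$. For $\vec{\alpha}\ne\vec{0}$, the support of $\prod_jf_j^{\alpha_j}$ already forces $\sum_i|x-y_i|^2>\delta^2$, so the truncation drops out. The remaining ``diagonal'' $\vec{\alpha}=\vec{0}$ piece is supported on $B(x,\delta)^m\cap\{\sum_i|x-y_i|^2>\delta^2\}$, and by Minkowski's integral inequality combined with \eqref{size1} it is controlled in $L^2(dv/v)$ by $C\prod_jMf_j(x)$.

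\textbf{Smoothness step.} For each $\vec{\alpha}\ne\vec 0$ and each $z\in B(x,\delta/2)$, compare $\int K_v(x,\vec{y})\prod_jf_j^{\alpha_j}\,d\vec{y}$ with $\int K_v(z,\vec{y})\prod_jf_j^{\alpha_j}\,d\vec{y}$ through the three-term triangle decomposition
$$K_v(x,\vec{y})-K_v(z,\vec{y})=[K_v(x)-K_{t,v}^{(0)}(x)]+[K_{t,v}^{(0)}(x)-K_{t,v}^{(0)}(z)]-[K_v(z)-K_{t,v}^{(0)}(z)];$$
the outer two pieces are controlled in $L^2(dv/v)$ by (H2)'s estimate \eqref{smoothnessforkernel1}, and the middle by (H3)'s estimate \eqref{smoothnessforkernel2}. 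After a dyadic decomposition of the integrals against $\prod_j|f_j^{\alpha_j}|$ into annular shells and applying Minkowski, the $t^{\varepsilon/s}$-decay and the $\phi$-compact support together yield contributions of the form $C\prod_jMf_j(x)$ that are uniform in $z\in B(x,\delta/2)$ and in $\delta>0$. Summing over $\vec{\alpha}\ne\vec{0}$ (using $\sum_{\vec{\alpha}\ne\vec 0}T_v(\vec{f}^{\vec\alpha})=T_v\vec{f}-T_v(\vec{f}^{\,0})$) gives, for every $z\in B(x,\delta/2)$,
$$\left(\int_0^\infty|I_{\delta,v}(x)|^2\,\frac{dv}{v}\right)^{1/2}\le T(\vec{f})(z)+T(\vec{f}^{\,0})(z)+C\prod_jMf_j(x).$$

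\textbf{Kolmogorov closing step and main obstacle.} Apply Kolmogorov's inequality on $B=B(x,\delta/2)$ at exponent $\eta<1/m$, combined with the endpoint estimate of Theorem \ref{thm:endpoint} applied to $\vec{f}^{\,0}$: one can select $z\in B$ satisfying simultaneously $T(\vec{f})(z)\le 2M_\eta(T(\vec{f}))(x)$ and $T(\vec{f}^{\,0})(z)\le C\prod_jMf_j(x)$, since each Chebyshev bad set occupies a strict fraction of $|B|$. Substituting such $z$ produces a $\delta$-uniform bound, and passing to $\sup_{\delta>0}$ yields the claim. The principal obstacle is that the definition of $T^*$ places $\sup_\delta$ \emph{inside} the $L^2(dv/v)$ norm while the estimate above is derived $\delta$-by-$\delta$; reconciling these two requires that the entire error be uniform in $\delta$ at the pointwise $L^2(dv/v)$ level, which is ensured precisely by the scaling $t=\delta^s$ together with the decay factors $t^{\varepsilon/s}$ and compact $\phi$-support built into (H2)--(H3).
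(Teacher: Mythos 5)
Your overall strategy is the paper's: compare the $\delta$-truncation with the untruncated operator at nearby points $z$ through the approximation kernel $K^{(0)}_{t,v}$ with $t\sim\delta^{s}$, control the errors by $\prod_j Mf_j(x)$, and close with Kolmogorov's inequality plus the weak endpoint bound for the local part. But your smoothness step has a genuine gap. You apply the three-term comparison — in particular (H3)'s estimate \eqref{smoothnessforkernel2} for the middle piece $K^{(0)}_{t,v}(x,\vec y)-K^{(0)}_{t,v}(z,\vec y)$ — to \emph{every} piece $\vec\alpha\neq\vec 0$. However \eqref{smoothnessforkernel2} (and likewise \eqref{sizeforkernel2}) is only valid when $2t^{1/s}\le\min_{1\le j\le m}|x-y_j|$, i.e.\ when \emph{all} coordinates are far from $x$; on a mixed piece with some $\alpha_j=0$ one has $|x-y_j|<\delta$ on the support, so the hypothesis fails and you have no estimate at all for the middle term there (and in \eqref{smoothnessforkernel1} the $\phi$-terms no longer vanish for the near coordinates). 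The correct organization, which is what the paper does, is to reserve the $K^{(0)}_{t,v}$ comparison for the region where every $|x-y_j|\ge\delta$ (the paper's $V_\delta(x)$), and to handle everything having at least one near coordinate — your mixed pieces together with the truncated diagonal piece — directly from the size condition \eqref{size1}: after Minkowski, each near variable contributes $\delta^{n}Mf_j(x)$ and integrating the kernel decay over the far variables produces exactly the compensating negative power of $\delta$, so those pieces never need (H2)/(H3). Relatedly, your scaling $t=\delta^{s}$ is too large even on the all-far piece: the constraints $2t^{1/s}\le\max_j|x-y_j|$, $2t^{1/s}\le\min_j|z-y_j|$ and $2|x-z|\le t^{1/s}$ force $t^{1/s}$ (and the radius of the comparison ball) to be a small fraction of $\delta$; the paper takes $t=(\delta/4)^{s}$.

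A further caution: your claimed resolution of the ``$\sup_\delta$ inside the $dv/v$ integral'' issue does not actually resolve it. A bound on $\bigl(\int_0^\infty|I_{\delta,v}(x)|^2\,\frac{dv}{v}\bigr)^{1/2}$ that is uniform in $\delta$ controls the supremum of the $L^2(dv/v)$ norms (i.e.\ $T^{**}$), not the $L^2(dv/v)$ norm of the supremum; moreover your main terms $T(\vec f)(z)$ and $T(\vec f^{\,0})(z)$ enter through the triangle inequality in $L^2(dv/v)$, which is intrinsically a norm-level comparison. The paper's own proof has the same structure (it estimates each fixed-$\delta$ truncation and in effect proves the $T^{**}$ bound), so this defect is shared rather than specific to your write-up, but uniformity in $\delta$ should not be presented as settling it. Finally, choosing a single good $z$ by two Chebyshev exclusions instead of raising to the power $\eta$ and averaging over the ball, as the paper does, is a harmless variation.
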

\begin{proof} For a fixed point $x$ and  $\delta>0$ we denote by
$U_\delta(x)=\{ \vec{y} : \sum_{i=1}^m|x-y_i|^2 < \delta^2\}$ and $V_\delta(x)=\{ \vec{y}  : \inf_{1\leq j\leq m}|y_j-x| \geq \delta\}$. It is clear that
\begin{eqnarray*}
\left| T^*(\vec{f})(x)\right|&\leq & \left( \int_{0}^\infty \sup_{\delta>0}\left|\int_{\left(U_\delta(x) \cup V_\delta(x)\right)^c }K_v(x,\vec{y})\prod_{i=1}^mf_i(y_i)dy_i\right|^2\frac{dv}{v}\right)^{\frac 12}\\
&+&\left( \int_{0}^\infty\sup_{\delta>0}\left|\int_{V_\delta(x)}K_v(x,\vec{y})\prod_{i=1}^mf_i(y_i)dy_i\right|^2\frac{dv}{v}\right)^{\frac 12}.
\end{eqnarray*}
By using the size condition and Minkowski's inequality, we get
\begin{eqnarray}
 \begin{split}\label{t1}
&\left( \int_{0}^\infty \left|\int_{\left(U_\delta(x) \cup V_\delta(x)\right)^c}K_v(x,\vec{y})\prod_{i=1}^mf_i(y_i)dy_i\right|^2\frac{dv}{v}\right)^{\frac 12}\\
&\leq  C \left|\int_{\left(U_\delta(x) \cup V_\delta(x)\right)^c}\left( \int^{\infty}_0 \left|K_v(z,\vec{y}) \right|^2\frac{dv}{v}\right)^2\prod_{i=1}^m|f_i(y_i)|dy_i\right|\\
&\leq  C \left|\int_{\left(U_\delta(x) \cup V_\delta(x)\right)^c}\frac{A}{(\delta+\sum_{i=1}^m |y_i-x|)^{mn}}\prod_{i=1}^m|f_i(y_i)|dy_i\right|\\
&\leq  C\prod_{j\in \{j_1,\dots, j_l\}} \frac{1}{\delta^n}\int_{|x-y_j|<\delta}|f_i(y_i)|dy_i  \\
&\quad \times \prod_{j\notin \{j_1,\dots, j_l\}} \int_{|x-y_j|\geq\delta}\frac{\delta^{(ln)/(m-l)}}{|x-y_j|^{(mn)/(m-l)}}|f_i(y_i)|dy_i\\
&\leq  C \prod_{j=1}^m M(f_j)(x).
 \end{split}
 \end{eqnarray}
We are ready to estimate the second term.  Fix $\delta>0$ and let $B(x,\delta / 2)$ be the ball of center $x$ and radius $\delta / 2$.
Set $\vec{f}_0=\left( f_1\chi_{B(z,\delta)},\cdots,  f_m\chi_{B(z,\delta)}\right),$ for any $z\in B(x,\frac \delta 2)$, we have
\begin{eqnarray*}
&&\tilde{T}_\delta(\vec{f})(z)\\
&:=&\left( \int_{0}^\infty \left|\int_{V_\delta(z)}K_v(z,\vec{y})\prod_{i=1}^mf_i(y_i)dy_i\right|^2\frac{dv}{v}\right)^{\frac 12}\\
&=&\left( \int_{0}^\infty \left|\int_{(\mathbb{R}^{n})^m}K_v(z,\vec{y})\left(\prod_{i=1}^mf_i(y_i)-\prod_{i=1}^mf_i(y_i)\chi_{Q(z,\delta)} -\prod_{i=1}^mf_i(y_i)\chi_{\left(U_\delta(z) \cup V_\delta(z)\right)^c}\right) dy_i\right|^2\frac{dv}{v}\right)^{\frac 12}\\
&\leq &T(\vec{f})(z)+T(\vec{f}_0)(z)+\left( \int_{0}^\infty \left|\int_{\left(U_\delta(z) \cup V_\delta(z)\right)^c}K_v(x,\vec{y})\prod_{i=1}^mf_i(y_i)dy_i\right|^2\frac{dv}{v}\right)^{\frac 12}.\\
\end{eqnarray*}
The condition $z\in B(x,\delta / 2)$ and $|y_j-z|>\delta$ imply  $|y_j-z|/ 2\leq |y_j-x|\leq 2|y_j-z|.$ By using an argument as that in \eqref{t1}, we deduce that
$$\left( \int_{0}^\infty \left|\int_{\left(U_\delta(z) \cup V_\delta(z)\right)^c}K_v(x,\vec{y})\prod_{i=1}^mf_i(y_i)dy_i\right|^2\frac{dv}{v}\right)^{\frac 12}\leq C\prod_{j=1}^m M(f_j)(x).$$
In order to estimate  $\tilde{T}_\delta(\vec{f})(x)-\tilde{T}_\delta(\vec{f})(z)$,  we introduce an operator $\mathcal{T}$ which is given by
$$\mathcal{T}_\delta f(z)=\left( \int_{0}^\infty \left|\int_{V_\delta(z)}\int_{(\mathbb{R}^n)}K_v(y,\vec{y})a_t(z,y)dy\prod_{i=1}^mf_i(y_i)dy_i\right|^2\frac{dv}{v}\right)^{\frac 12}.$$
 where $t=(\delta / 4)^{s}$ and $s$ is a constant in $\eqref{smallcondition2}$.
We decompose it into
\begin{eqnarray*}
&&\left|\tilde{T}_\delta(\vec{f})(x)-\tilde{T}_\delta(\vec{f})(z)\right|\\
&\leq & \left|\mathcal{T}_\delta(\vec{f})(z)-\tilde{T}_\delta(\vec{f})(z)\right|+ \left|\mathcal{T}_\delta(x)-\mathcal{T}_\delta(\vec{f})(z)\right|+ \left|\tilde{T}_\delta(\vec{f})(x)-\mathcal{T}_\delta(\vec{f})(x)\right|\\
&= & I+ II+III.
 \end{eqnarray*}
 For the first term we use Assumption (H2).  First note that $|z-y_j|\geq \delta=4t^{1/ s}.$
  This, in combination with the fact that $\textsf{supp}\, \phi \subset [-1,1],$ yields that $\phi(\frac{|z-y_j|}{t^{1/ s}})=0.$
 By the facts that $z\in B(x,\delta / 2)$ and $|y_j-z|\geq \delta$, we obtain
 $$ \frac{|y_j-z|}{2}\leq |y_j-x|\leq 2|y_j-z|.$$
  By \eqref{smoothnessforkernel1} in Assumption (H2) we obtain that

 \begin{eqnarray*}
&&\left|\mathcal{T}_\delta(\vec{f})(z)-\tilde{T}_\delta(\vec{f})(z)\right|\\
&=&\bigg|\left( \int_{0}^\infty \left|\int_{V_\delta(z)}\int_{(\mathbb{R}^n)}K_v(y,\vec{y})a_t(z,y)dy\prod_{i=1}^mf_i(y_i)dy_i\right|^2\frac{dv}{v}\right)^{\frac 12}\\
&&-\left( \int_{0}^\infty \left|\int_{V_\delta(z)} K_v(z,\vec{y})\prod_{i=1}^mf_i(y_i)dy_i\right|^2\frac{dv}{v}\right)^{\frac 12}\bigg|\\
&\leq&\left( \int_{0}^\infty \left|\int_{V_\delta(z)}\left(K_{t,v}^{(0)}(z,\vec{y})-  K_v(z,\vec{y})\right)\prod_{i=1}^mf_i(y_i)dy_i\right|^2\frac{dv}{v}\right)^{\frac 12}\\
&\leq & \left( \int_{0}^\infty \left|\int_{V_\delta(z)}\left| K_v(z,\vec{y})-K_{t,v}^{(0)}(z,\vec{y})\right|\prod_{i=1}^m|f_i(y_i)|dy_i\right|^2\frac{dv}{v}\right)^{\frac 12}  \\
&\leq & \int_{V_\delta(z)} \left(\int_{0}^\infty \left| K_v(z,\vec{y})-K_{t,v}^{(0)}(z,\vec{y})\right|^2\frac{dv}{v}\right)^{\frac 12}\prod_{i=1}^m|f_i(y_i)|dy_i \\
&\leq & \int_{V_\delta(z)} \frac{\delta^\epsilon}{(|z-y_1|+\dots +|z-y_m|)^{mn+\epsilon}}\prod_{i=1}^m|f_i(y_i)|dy_i\\
&\leq & \int_{|y_1-x|\geq \delta  ,\dots, |y_m-x|\geq \delta  } \frac{\delta^\epsilon}{(|z-y_1|+\dots +|z-y_m|)^{mn+\epsilon}}\prod_{i=1}^m|f_i(y_i)|dy_i\\
&\leq & \prod_{j=1}^m M(f_j)(x).
 \end{eqnarray*}
 Similarly we get  $III \leq  \prod_{j=1}^m M(f_j)(x).$
 \par We now turn to the second term $II.$
  \begin{eqnarray*}
 &&\left|\mathcal{T}_\delta(\vec{f})(x)-\mathcal{T}_\delta(\vec{f})(z)\right|\\
 &\leq & C \bigg|  \left( \int_{0}^\infty \left|\int_{ V_\delta(x)\setminus V_\delta(z)} \int_{\mathbb{R}^n} a_t(x,y)K_v(y,\vec{y})dy\prod_{i=1}^mf_i(y_i)dy_i\right|^2\frac{dv}{v}\right)^{\frac 12}  \bigg|\\
  &+ &C\bigg|\left( \int_{0}^\infty \left|\int_{ V_\delta(z)}\int_{\mathbb{R}^n} a_t(x,y) K_v(y,\vec{y})dy\prod_{i=1}^mf_i(y_i)dy_i\right|^2\frac{dv}{v}\right)^{\frac 12} \\
   && - \left( \int_{0}^\infty \left|\int_{ V_\delta(z)}\int_{\mathbb{R}^n} a_t(z,y) K_v(y,\vec{y})dy\prod_{i=1}^mf_i(y_i)dy_i\right|^2\frac{dv}{v}\right)^{\frac 12} \bigg|\\
   &=&II_1+II_2.
 \end{eqnarray*}
As in \cite{duonggong}, since $z\in B(x, \delta / 2),$ we use the fact $V_{3\delta / 2}(x)\subset V_{\delta }(z)$ and $V_{\delta }(x)\setminus V_{\delta }(z)\subset V_{\delta }(x)\setminus V_{3\delta / 2}(x) \subset (\mathbb{R}^n)^m \setminus (U_\delta(x)\cup V_{3\delta/ 2}(x)).$ By \eqref{sizeforkernel2} in Assumption (H2), we obtain
  \begin{eqnarray*}
 II_1&\leq &C \int_{(\mathbb{R}^n)^m\setminus (U_\delta(x)\cup V_{3\delta/ 2}(x))}  \left( \int_{0}^\infty \left|\int_{ \mathbb{R}^n}K_v(y,\vec{y})a_t(z,y)dy\right|^2\frac{dv}{v}\right)^{\frac 12} \prod_{i=1}^m |f_i(y_i)|dy_i  \\
 &=& \int_{(\mathbb{R}^n)^m\setminus (U_\delta(x)\cup V_{3\delta/ 2}(x))}  \left( \int_{0}^\infty \left|K_{t,v}^{(0)}(z,\vec{y})\right|^2\frac{dv}{v}\right)^{\frac 12} \prod_{i=1}^m |f_i(y_i)|dy_i\\
 &\leq &C \int_{(\mathbb{R}^n)^m\setminus (U_\delta(x)\cup V_{3\delta/ 2}(x))}  \frac{A}{(\delta+\sum_{i=1}^m |y_i-x|)^{mn}}\prod_{i=1}^mf_i(y_i)dy_i\\
 &\leq &C\prod_{j=1}^m M(f_j)(x).
 \end{eqnarray*}

For $II_2$, we use \eqref{smoothnessforkernel2} in Assumption (H3), and a similar argument as in term $I$ to deduce that  for $z\in B(x,\delta / 2)$ we have

   \begin{eqnarray*}
 II_2&=&\bigg|\left( \int_{0}^\infty \left|\int_{V_\delta(z)}K_{t,v}^{(0)}(x,\vec{y})\prod_{i=1}^mf_i(y_i)dy_i\right|^2\frac{dv}{v}\right)^{\frac 12}\\
 &&-\left( \int_{0}^\infty \left|\int_{V_\delta(z)}K_{t,v}^{(0)}(z,\vec{y})\prod_{i=1}^mf_i(y_i)dy_i\right|^2\frac{dv}{v}\right)^{\frac 12}\bigg|\\
 &\leq &C  \left( \int_{0}^\infty \left|\int_{V_\delta(z)}(K_{t,v}^{(0)}(x,\vec{y})-K_{t,v}^{(0)}(z,\vec{y}))\prod_{i=1}^mf_i(y_i)dy_i\right|^2\frac{dv}{v}\right)^{\frac 12}\\
 &\leq &C  \bigg|\int_{V_\delta(z)} \left( \int_{0}^\infty \left| K_{t,v}^{(0)}(x,\vec{y})-K_{t,v}^{(0)}(z,\vec{y})\right|^2\frac{dv}{v}\right)^{\frac 12} \prod_{i=1}^mf_i(y_i)dy_i\bigg|\\
  &\leq &C \int_{|y_1-x|\geq \delta ,\dots, |y_m-x|\geq \delta  } \frac{\delta^\epsilon}{(|z-y_1|+\dots +|z-y_m|)^{mn+\epsilon}}\prod_{i=1}^m|f_i(y_i)|dy_i\\
  &\leq & C\prod_{j=1}^m M(f_j)(x).
 \end{eqnarray*}
 Then we obtain $$II\leq C \prod_{j=1}^m M(f_j)(x).$$
 Thus, for any $z\in B(x,\delta / 2),$ we have
 $$\tilde{T}_\delta(\vec{f})(x)\leq C \prod_{j=1}^m M(f_j)(x) + |T(\vec{f})(z)|+|T(\vec{f}_0)(z)|.$$

Fix $0<\eta<1/ m.$ Raising the above inequality to the power $\eta$, integrating over $z\in B(x,\delta / 2),$ and dividing by $B$ we obtain
   \begin{eqnarray*}
\left|\tilde{T}_\delta(\vec{f})(x)\right|^{\eta}&\leq &C \left(\prod_{j=1}^m M(f_j)(x)\right)^{\eta}+M \left(T(\vec{f})^\eta\right)(x)+\frac{1}{|B|}\int_{B}|T(\vec{f_0})(z)|^\eta dz.\\
 \end{eqnarray*}
 The left part is the same as in \cite{duonggong}. Then we finish the proof of this Lemma.

\end{proof}

\begin{lemma}\label{32}(\cite{Duoandikoetxea}) If $\omega\in A_p$ and $p> 1$, then $M$ maps from $L^{p }(\omega)$ to $L^{p }(\omega)$.
\end{lemma}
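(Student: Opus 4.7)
The statement is Muckenhoupt's classical theorem on the weighted boundedness of the Hardy--Littlewood maximal operator, which is why the paper simply attributes it to Duoandikoetxea's textbook rather than reproving it. My plan would be to proceed in two stages: first establish the weak-type $(p,p)$ inequality directly from the $A_p$ definition, then bootstrap to the strong-type bound via the self-improving property of $A_p$ combined with Marcinkiewicz interpolation.

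For the weak-type estimate, I would apply a Vitali-type covering argument to the level set $\{x : Mf(x) > \lambda\}$. Each point in this set is the center of a ball $B$ with $\frac{1}{|B|}\int_B |f|\,dy > \lambda$, and extracting a countable disjoint subcollection $\{B_j\}$ whose $5$-fold dilates cover the level set gives
\begin{equation*}
\omega(\{Mf > \lambda\}) \le \sum_j \omega(5 B_j) \le C \sum_j \omega(B_j).
\end{equation*}
For each $B_j$, H\"older's inequality yields
\begin{equation*}
|B_j|^p \le \left(\int_{B_j} |f|\,dy\right)^p \cdot \left(\int_{B_j} \omega^{1-p'}\,dy\right)^{p-1} \cdot \omega(B_j)^{-1} \cdot \omega(B_j),
\end{equation*}
and the $A_p$ condition then forces $\omega(B_j) \le [\omega]_{A_p} \lambda^{-p} \int_{B_j} |f|^p \omega\,dx$. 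Summing over the disjoint $B_j$ delivers the weak-type $(p,p)$ bound with constant depending on $[\omega]_{A_p}$.

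For the strong type, the key ingredient is the openness of $A_p$: there exists $\varepsilon = \varepsilon([\omega]_{A_p}, p, n) > 0$ such that $\omega \in A_{p-\varepsilon}$. This follows from the reverse H\"older inequality for $A_\infty$ weights, proved by a Calder\'on--Zygmund decomposition applied to $\omega$ itself. Once this self-improvement is in hand, the weak-type $(p-\varepsilon, p-\varepsilon)$ bound just established for $\omega$ combines with the trivial $L^\infty(\omega) \to L^\infty(\omega)$ bound $\|Mf\|_\infty \le \|f\|_\infty$, and Marcinkiewicz interpolation yields the desired strong-type $(p,p)$ estimate on $L^p(\omega)$.

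The principal obstacle is really the openness of $A_p$, since the weak-type bound and the interpolation step are both straightforward once that self-improvement is available. Establishing openness rigorously requires the doubling property of $A_p$ weights and the reverse H\"older inequality; since all of these steps are standard and collectively form a substantial portion of the weighted theory, the paper wisely defers to Duoandikoetxea's monograph for a complete treatment rather than reproducing this chain of arguments.
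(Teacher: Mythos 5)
The paper offers no proof of this lemma at all: it is quoted verbatim from the cited reference \cite{Duoandikoetxea} (Muckenhoupt's theorem), so there is no internal argument to compare against. Your outline is exactly the standard proof given in that reference --- weak type $(p,p)$ from a Vitali covering plus H\"older and the $A_p$ condition, then the openness $\omega\in A_{p-\varepsilon}$ via reverse H\"older, and Marcinkiewicz interpolation with the trivial $L^\infty$ bound --- and it is sound, so for the purposes of this paper the citation (or your sketch) suffices. One small slip: your displayed inequality, after cancelling the factor $\omega(B_j)^{-1}\omega(B_j)$, reads $|B_j|^p \le \bigl(\int_{B_j}|f|\,dy\bigr)^p\bigl(\int_{B_j}\omega^{1-p'}\,dy\bigr)^{p-1}$, which is not an instance of H\"older's inequality and is false as stated (take $f$ small); the correct chain starts from the defining property $\lambda|B_j| < \int_{B_j}|f|\,dy$, applies H\"older in the form $\int_{B_j}|f| \le \bigl(\int_{B_j}|f|^p\omega\bigr)^{1/p}\bigl(\int_{B_j}\omega^{1-p'}\bigr)^{1/p'}$, and then invokes the $A_p$ condition, which indeed yields the conclusion you state, $\omega(B_j)\le [\omega]_{A_p}\lambda^{-p}\int_{B_j}|f|^p\omega\,dx$. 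Also note that the doubling bound $\omega(5B_j)\le C\omega(B_j)$ follows directly from the $A_p$ condition applied to $B_j\subset 5B_j$ and does not need the reverse H\"older inequality; only the openness step does.
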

\begin{lemma}\label{33}(\cite{Chen}) If $\omega\in A_p$ and $p\geq 1$, then $M$ maps from $L^{p,\infty}(\omega)$ to $L^{p,\infty}(\omega)$.
\end{lemma}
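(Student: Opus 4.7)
The plan is to combine the pointwise domination of $T^{*}$ established in Lemma \ref{31} with the weighted bounds already in hand: Theorem \ref{thm:22} for $T$ itself, and Lemmas \ref{32}--\ref{33} for the Hardy--Littlewood maximal operator $M$. Fixing some $\eta$ with $0<\eta<1/m$, Lemma \ref{31} yields
\[
T^{*}(\vec f)(x) \leq C\Bigl( M_{\eta}(T(\vec f))(x) + \prod_{j=1}^{m} M(f_j)(x) \Bigr),
\]
so it suffices to control each of the two summands on the right in either $L^{p}(\omega)$ or $L^{p,\infty}(\omega)$. I will use freely the standard nesting $A_{p}\subset A_{q}$ for $p\leq q$: since $p=(\sum_{j}1/p_{j})^{-1}\leq p_{j}$ for every $j$ and $p/\eta>p\geq 1$, the hypothesis $\omega\in A_{p}$ gives simultaneously $\omega\in A_{p_{j}}$ for all $j$ and $\omega\in A_{p/\eta}$.

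For part (i), Lemma \ref{32} applied at the exponent $p/\eta$ together with the definition $M_{\eta}f=(M(|f|^{\eta}))^{1/\eta}$ gives
\[
\|M_{\eta}(T\vec f)\|_{L^{p}(\omega)} = \|M(|T\vec f|^{\eta})\|_{L^{p/\eta}(\omega)}^{1/\eta} \lesssim \|T(\vec f)\|_{L^{p}(\omega)},
\]
and Theorem \ref{thm:22}(1) bounds the right side by $\prod_{i}\|f_{i}\|_{L^{p_{i}}(\omega)}$. For the second summand, ordinary H\"older's inequality with exponents $p_{1},\ldots,p_{m}$ combined with Lemma \ref{32} at each $p_{j}>1$ yields
\[
\Bigl\|\prod_{j=1}^{m} M(f_{j})\Bigr\|_{L^{p}(\omega)} \leq \prod_{j=1}^{m}\|M(f_{j})\|_{L^{p_{j}}(\omega)} \lesssim \prod_{j=1}^{m}\|f_{j}\|_{L^{p_{j}}(\omega)}.
\]

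For part (ii), the strategy is the same but executed in Lorentz spaces. The first term is handled identically, except one invokes Lemma \ref{33} in place of Lemma \ref{32} to obtain $\|M_{\eta}(T\vec f)\|_{L^{p,\infty}(\omega)}\lesssim \|T(\vec f)\|_{L^{p,\infty}(\omega)}$, and then Theorem \ref{thm:22}(2) finishes that piece. For the product $\prod_{j}M(f_{j})$, the indices split into $\{j:p_{j}=1\}$ and $\{j:p_{j}>1\}$: for the former one uses the weak-$(1,1)$ bound from Lemma \ref{33}, and for the latter the strong bound from Lemma \ref{32}. A generalized H\"older inequality in the Lorentz scale (a weak-type factor times strong-type factors yields a weak-type product with the correct exponent $1/p=\sum 1/p_{j}$) then gives the desired estimate. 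The main obstacle I anticipate is the careful bookkeeping of this Lorentz-space H\"older step when several $p_{j}$ equal $1$ simultaneously, and making sure the constant produced by Lemma \ref{31} is uniform enough to be integrated against the $A_{p}$ weight in both the strong and weak settings.
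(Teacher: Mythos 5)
Your proposal does not prove the statement you were asked to prove. Lemma \ref{33} is a statement about the Hardy--Littlewood maximal operator $M$ alone: that for $\omega\in A_p$ with $p\geq 1$, $M$ is bounded from $L^{p,\infty}(\omega)$ to $L^{p,\infty}(\omega)$. What you have written is instead a proof sketch of Theorem \ref{lemma2} (the weighted strong and weak type bounds for the maximal square function $T^{*}$), assembled from Lemma \ref{31}, Theorem \ref{thm:22}, and Lemmas \ref{32}--\ref{33}. Worse, your argument explicitly \emph{invokes} Lemma \ref{33} (``one invokes Lemma \ref{33} in place of Lemma \ref{32}''), so as a purported proof of Lemma \ref{33} it is circular. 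For the record, the paper offers no proof of this lemma at all: it is imported as a black box from the reference \cite{Chen}, which is why it carries the citation in its statement.

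If you did want to supply an argument for the lemma itself, the natural route for $p>1$ is the openness of the Muckenhoupt classes: $\omega\in A_p$ implies $\omega\in A_{p-\varepsilon}$ for some $\varepsilon>0$, hence $M$ is bounded on $L^{p_0}(\omega)$ and $L^{p_1}(\omega)$ for some $p_0<p<p_1$, and Marcinkiewicz interpolation in the Lorentz scale yields boundedness on $L^{p,\infty}(\omega)$. The endpoint $p=1$ is genuinely delicate --- $M$ does not map unweighted $L^{1,\infty}$ into $L^{1,\infty}$ (consider $f(x)=|x|^{-n}$, for which $Mf\equiv\infty$) --- so that case must be handled exactly as in \cite{Chen} and cannot be dispatched by interpolation. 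None of this appears in your proposal.
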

\textbf{Proof of Theorem \ref{lemma2}.}

\begin{proof} We choose a positive number $\eta$ such that $\eta< p$, then
Theorem \ref{lemma2}(ii) follows by using Lemma \ref{31}, Lemma \ref{33}, Theorem \ref{thm:22} and the H\"{o}lder inequality for weak spaces(see \cite{grafkosbook}, p.15).
\begin{eqnarray*}
||T^*(\vec{f})||_{L^{p,\infty}(\omega)}&\leq &C \left( ||M_\eta(T(\vec{f}))||_{L^{p,\infty}(\omega)}+||\prod_{j=1}^m M(f_j)||_{L^{p,\infty}(\omega)}\right)\\
&= &C \left( ||M(|T(\vec{f}|^\eta))||^{\frac{1}{\eta}}_{L^{\frac{p}{\eta},\infty}(\omega)}+\prod_{j=1}^m|| M(f_j)||_{L^{p_j,\infty}(\omega)}\right)\\
&\leq &C \left( |||T(\vec{f})|^\eta||^{\frac{1}{\eta}}_{L^{\frac{p}{\eta},\infty}(\omega)}+\prod_{j=1}^m ||f_j||_{L^{p_j}(\omega)}\right)\\
&\leq &C \left( ||T(\vec{f})||_{L^{p,\infty}(\omega)}+\prod_{j=1}^m ||f_j||_{L^{p_j}(\omega)}\right)\\
&\leq & C \prod_{j=1}^m ||f_j||_{L^{p_j}(\omega)}.
\end{eqnarray*}
The proof of (i) is similar.
\end{proof}


\begin{thebibliography}{999}


\bibitem {anh}B. T. Anh and X. T. Duong. \emph{On commutators of vector $BMO$ functions and multilinear singular integrals with non-smmoth kernels},
J. Math. Anal. Appl. \textbf{371} (2010) 80-84.
\bibitem {Chen}X. Chen, \emph{Weighted estimates for maximal operator of multilinear singular
intergral}, Bull. Polish Acad. Sci. Math. \textbf{58}(2010), 129-135.
\bibitem {coif1}R. R. Coifman and Y. Meyer, \emph{On commutators of singular integrals and bilinear singular integrals},
Trans. Amer. Math. Soc. \textbf{212} (1975), 315-331.

\bibitem{coif2}R. R. Coifman and Y. Meyer, \emph{Commutateurs
d'int$\acute{e}$grales singuli$\grave{e}$res et op$\acute{e}$rateurs
multilin$\acute{e}$aires}, Ann. Inst. Fourier, Grenoble \textbf{28} (1978)
177-202.

\bibitem{coif3}R. R. Coifman and Y. Meyer, \emph{Au-del$\grave{a}$ des
op$\acute{e}$rateurs pseudo-diff$\acute{e}$rentiels}, Asterisque \textbf{57,}
1978.

 \bibitem{Duoandikoetxea} J. Duoandikoetxea, Fourier Analysis, Graduate Studies in Math. 29, Amer. Math. Soc.,
Providence, Rhode Island, 2001.

\bibitem{DGY} X.T. Duong, L. Grafakos and L. Yan, {\em Multilinear operators with non-smooth kernels and commutators of singular integrals}, Trans. Amer. Math. Soc. \textbf{362} (2010), 2089--2113.
     \bibitem {duonggong}X. T. Duong, R. M. Gong, L. Grafakos, J. Li, L. X. Yan, \emph{Maximal operator for  multilinear singular integrals with non-smmoth kernels}, Indiana Univ.
Math. J. \textbf{58} (2009) 2517-2541.
\bibitem{duong} X. T. Duong and A. McIntosh, \emph{Singular Integral Operators with Non-smooth Kernels on Irregular
Domains}, Rev. Mat. Iberoamericana \textbf{ 15}(1999), 233-265.

\bibitem {fefferman}C. Fefferman and E. Stein. Some maximal inequalities. Am. J. Math. \textbf{93}(1971),107-115.
\bibitem {grafkosbook}L. Grafakos, \emph{Classical and Modern Fourier Analysis}, Prentice Hall, 2004.
\bibitem {Gra1}L. Grafakos and R. Torres, \emph{Multilinear Calder\'{o}n-Zygmund theory},
Adv. Math. \textbf{165} (2002) 124-164.
\bibitem{Gra2}L. Grafakos, R.H. Torres, \emph{Maximal operator and weighted norm
inequalities for multilinear singular integrals}, Indiana Univ. Math.
J. \textbf{51} (5) (2002) 1261-1276.

\bibitem {Gra3}L. Grafakos and R. Torres, \emph{On multilinear singular integrals of Calder\'{o}n-Zygmund type},
Publ. Mat. (2002) Extra, 57-91.
\bibitem {L}L. Grafakos, L. Liu and D. Yang, \emph{Multiple weighted norm inequalities for maximal multilinear singular integrals with non-smooth kernels,} Proc. Roy. Soc. Edinburgh Sect. A \textbf{141}(2011), 755-775.
  

  




  \bibitem{LOPTT} A.K. Lerner, S. Ombrosi, C. P\'erez, R.H. Torres and R. Trujillo-Gonz\'alez, \emph{New maximal functions and multiple weights for the multilinear Calder\'on-Zygmund theory}, Adv. Math.  \textbf{220}, 1222-1264 (2009).
\bibitem{LZ}G. Lu and P. Zhang,
\emph{Multilinear Calder\'on-Zygmund Operators with
kernels of Dini's type and Applications},
Nonlinear Anal: TMA. \textbf{107} (2014), 92-117.


\bibitem{MN}D. Maldonado and V. Naibo,
\emph{Weighted norm inequalities for paraproducts and bilinear
pseudodifferential operators with mild regularity},
J. Fourier Anal. Appl. \textbf{15} (2009), No. 2, 218-261.
 

\bibitem{ML} G. R. Ming and J. Li, Sharp maximal functions estimates for multilinear singular integrals with non-smooth kermels.
 Anal. Theory Appl. Vol.25 (2009), no. 4, 333-348.


\bibitem{XY} Q. Xue, J. Yan, \emph{On multilinear square function and its applications to multilinear Littlewood-Paley operators with non-convolution type kernels} J. Math. Anal. Appl. \textbf{422} (2015), 1342--1362.
\bibitem{YXY} J. Yan, Q. Xue, and K. Yabuta \emph{On multilinear Calder\'{o}n-Zygmund operators with lower regularity}, submitted.
   


\end{thebibliography}
\end{document}